\newcommand{\abs}[1]{\lvert#1\rvert}
\newtheorem{theorem}{Theorem}[section]
\newtheorem{proposition}[theorem]{Proposition}
\newtheorem{lemma}[theorem]{Lemma}
\newtheorem{remark}[theorem]{Remark}
\newcommand{\argmin}[1]{\mathop{\rm argmin}\limits_{#1}}
\newcommand{\innerp}[1]{\langle {#1} \rangle}
\def\c{\mathbf c}
\def\Z{\mathbb{ Z}}
\def\R{\mathbb{ R}}
\def\N{\mathbb{ N}}
\def\C{\mathbb{ C}}
\begin{document}

\title{
On  sparse interpolation  and the design of   deterministic interpolation points   }

\author{
Zhiqiang Xu
\thanks{Institute of Computational Mathematics and Scientific/Engineering Computing, AMSS, the Chinese Academy of Sciences, Beijing, China,
Email: {xuzq@lsec.cc.ac.cn}}
\and
Tao Zhou
\thanks{Institute of Computational Mathematics and Scientific/Engineering Computing, AMSS, the Chinese Academy of Sciences, Beijing, China,
Email: {tzhou@lsec.cc.ac.cn}.
}}

\date{}

\maketitle

\begin{abstract}
In this paper, we build up a framework for  sparse interpolation.
We first investigate the theoretical limit of the number of unisolvent points for  sparse interpolation under a general setting
and try to answer some basic questions of this topic. We also explore
the relation between  classical interpolation and  sparse interpolation. We second consider the design of the interpolation points for the $s$-sparse functions in high dimensional Chebyshev bases, for which the possible applications include uncertainty quantification, numerically solving stochastic or parametric PDEs and  compressed sensing.  Unlike the traditional random sampling method, we present in this paper a deterministic method to produce the interpolation points, and show its  performance with $\ell_1$ minimization by analyzing the
mutual incoherence of the interpolation matrix. Numerical experiments show that the deterministic points have a similar
performance with that of the random points.
\end{abstract}

\section{Introduction}

In signal processing, computer algebra, as well as in uncertainty quantification, there are increasing needs to efficiently recover a function  from a rather
small set of function values, where the function has sparse representations in some bases.
We state the problem as follows.
Assume that $\Omega\subset \R^d$ and that $\{B_j\}_{j\in \Lambda}$ is a set of $N$ complex-valued functions defined on $\Omega$,
where $\Lambda$ is an index set with $N:=\# \Lambda.$
A function
$$
f\,\,=\,\, \sum_{j\in \Lambda}c_jB_j
$$
is called {\em $s$-sparse} with respect to  $\{B_j\}_{j\in \Lambda}$ if at most $s\ll N$ coefficients of $\{c_j\}_{j\in \Lambda}$ are nonzero.
We denote by ${\bf U}^s$ the set of $s$-sparse functions with respect to  $\{B_j\}_{j\in \Lambda}$, i.e.,
\begin{equation}\label{eq:definition-U}
{\bf U}^s\,\,:=\,\,{\bf U}^s(\{B_j\}_{j\in \Lambda})\,\,:=\,\, \{ f =\sum_{j\in T} c_jB_j : T\subset \Lambda, \#T\leq s\}.
\end{equation}
The {\em $s$-sparse interpolation} with the functions $\{B_j\}_{j\in \Lambda}$ and the domain $\Omega$ is to  reconstruct  $f\in {\mathbf U}^s$ from $m$ samples $\{x_j,f(x_j)\}_{j=1}^m$ where $\{x_1,\ldots,x_m\}\subset \Omega$ are $m$ distinct points. In other words, one wants to find an index set
 $T_0\subset \Lambda$ with $\#T_0\leq s$  and coefficients $\{c_j\}_{j\in T_0}$, such that
 \begin{equation}\label{eq:int}
 \sum_{j\in T_0} c_jB_j(x_j)=f(x_j),\quad j=1,\ldots,m.
 \end{equation}
The point set $\{x_1,\ldots,x_m\}$ is said to be {\em  unisolvent}  if
$$
f(x_j)=g(x_j),\qquad j=1,\ldots,m
$$
implies $f\equiv g$ whenever  $f,g\in {\mathbf U}^s$.

\subsection{Related work}
Compressed sensing presents a  theoretical framework for investigating the $s$-sparse interpolation.
Let $\mathbf{A}\in \C^{m\times N}$ be the interpolation matrix, namely,
\begin{equation}\label{eq:A}
\mathbf{A}\,:=\,\,[B_{j}(x_t)]_{ t=1,\ldots,m,\, j\in \Lambda}
\end{equation}
and set
$$
\mathbf{b}\,:=\,\,[f(x_1),\ldots,f(x_m)]^\top.
$$
The $s$-sparse interpolation is equivalent to find a $s$-sparse vector in the following set
$$
\{\c\in \C^N: \mathbf{A}\c=\mathbf{b}\}.
$$
Based on compressed sensing theory, one can use the $\ell_1$ minimization to find the sparse solution to $\mathbf{Ac}=\mathbf{b}$ provided that
$\mathbf{A}$ satisfies the RIP condition \cite{CRT}.
In fact, let
\begin{equation}\label{eq:P1}
(P_1)\qquad\quad \c^\#:=\argmin{ \c\in \C^N}  \{\|\c\|_1\,\, \text{\rm subject to}\,\, \mathbf{A}\c=\mathbf{b}\}.
\end{equation}
Then one can reconstruct
$
f=\sum_{j}\c^{\#}_j B_j
$
successfully  in many settings for $\mathbf{A}$. Thus, we  can employ the methods in compressed sensing to investigate the $s$-sparse interpolation.

We next review  results for some special bases $\{B_j\}_{j\in \Lambda}$ and $\Omega$, which are obtained using  techniques developed in compressed sensing.
When  $B_j(t)=\exp(2\pi i j  t)$ (where $i^2=-1$) and $\Omega=[0,1]$, the $s$-sparse interpolation is reduced to the recovery of  sparse
trigonometric polynomials, which is an active research topic in recent years \cite{Rauhut1 , Rauhut2, xu, Iwen, LWC}.
In this direction, it is shown  that one can recover $f\in {\mathbf U}^s$ via $\ell_1$  minimization  from $\{x_j,f(x_j)\}_{j=1}^m$ with high probability when $x_1,\ldots,x_m$ are $m\asymp s(\log N)^4$ random points in $[0,1]$.
In the area of uncertainty quantification \cite{Fabio,XiuK,SPDE}, one is interested in the cases where $\{B_j\}_{j\in \Lambda}$ are orthogonal polynomials, such as Legendre polynomials and Chebyshev polynomials etc.
So, in \cite{RW}, one investigates the case with $B_j(t)$ being the $j$th order Legendre polynomials defined on $\Omega=[-1,1],$ and shows that $m\asymp s\log^4N$ sampling points (which are chosen randomly according to the Chebyshev measure) are enough to recover $s$-sparse Legendre polynomials with high probability. In \cite{XiuL1}, this result is extended to the high dimensional cases with $B_j$ being the tensor product of one dimensional Legendre polynomials. Some key properties, such as the RIP property of the interpolation matrix,  are also investigated in the above literatures. We remark that in the above works, the interpolation  points are all chosen by a random method.
Deterministic sampling is also investigated  for sparse trigonometric polynomials by taking the advantage of the structure of $\exp(2\pi i j t)$ \cite{cprony, LWC, Iwen}. In this direction, the classic  Prony method is also extended to investigate the sparse interpolation with the one dimensional Chebyshev polynomial bases \cite{prony}.

\subsection{Our contribution}
The aim of this paper is twofold. Despite many literatures on  sparse interpolation, there is little work on the theoretical limit of the number of  unisolvent points.  We first  provide a framework of sparse interpolation. Particularly, given $\{B_j\}_{j\in \Lambda}$ and $\Omega$,
we are interested in the following problems:
  \begin{enumerate}[{\bf Problem} 1.]
  \item What is the minimum $m$ for which there exists  a point set  $\{x_1,\ldots,x_m\}$  such that it is unisolvent for the $s$-sparse
  interpolation?
  \item What is the minimum $m$ for which  any point set  $\{x_1,\ldots,x_m\}$  with $m$ distinct points is unisolvent for the $s$-sparse   interpolation?
  \end{enumerate}
In this work, we shall employ the results in classical approximation theory to investigate Problem  1, 2, and bridge a gap between the $s$-sparse interpolation and the classical interpolation.

Our second aim is to design a point set  $\{x_1,\ldots,x_m\}$ such that one can recover efficiently the $s$-sparse function $f$ from
$\{(x_j,f(x_j))\}_{j=1}^m.$ We state the problem as follows:
\begin{enumerate}[{\bf Problem} 3.]
  \item How to choose a point set $\{x_1,\ldots,x_m\}$ such that one can recover $f\in {\mathbf U}^s$ efficiently from $\{(x_j,f(x_j))\}_{j=1}^m$?
\end{enumerate}
Our original motivation for this work was the recovery of sparse multivariate Chebyshev polynomials which is
raised in uncertainty quantification  \cite{SPDE,spde2, XiuL1}.
So, for Problem 3, we focus on the case where $\{B_j\}_{j\in \Lambda}$ are high dimensional Chebyshev polynomials.
Motivated by the results in compressed sensing \cite{xu}, we  present a \textit{deterministic} method to produce the points $x_1,\ldots,x_m$,
 and show that the mutual incoherence constant of the interpolation matrix associated with the deterministic points is small.
Hence, one can recover the high dimensional  $s$-sparse Chebyshev polynomials by $\ell_1$ minimization (see Theorem \ref{th:TP}, \ref{th:TD}).
The numerical experiments show that the performance of our method is similar with that of  the random one.
We believe that our deterministic points have potential applications in  the context of uncertainty quantification, especially in  numerical solving stochastic PDEs.  The last, but not the least,  the interpolation points presented in this paper are in an analytic form and hence
they are easy to be produced.

This rest of the paper is organized as follows. Section 2 provides some necessary concepts
and results to be used in our investigation. We study the number of unisolvent points in Section 3.
In Section 4, we present  deterministic points for the $s$-sparse interpolation in high dimensional Chebyshev bases and analyze its recovery ability. In the last section, numerical experiments are given to show the  efficiency of the deterministic points.

\section{Preliminaries}
In this section, we introduce some preliminaries which will play important roles in the following sections.

{\bf Chebyshev systems.}  A system  of functions $\{f_1,\ldots,f_m\}$
defined on $\Omega\subset \R^d$ is called a {\em Chebyshev system} if the determinant
$$
\det [f_j(x_k)]_{1 \leq k,j \leq m}
$$
does not vanish for any $m$ distinct points $\{x_1,\ldots,x_m\}\subset \Omega$. Chebyshev systems are important in  many areas  \cite{AT}, such as approximation theory,  moment problems, etc.

{\bf Mutual Incoherence Constant.}
We suppose that $\mathbf{A}$ is a $m\times N$ matrix with column vectors $\mathbf{A}_1,\ldots,\mathbf{A}_N$. The mutual incoherence constant  of $\mathbf{A}$ is defined
as
\begin{equation}\label{eq:MIC}
\mu(\mathbf{A})\,\,:=\,\, \max_{k\neq j}\frac{\abs{\innerp{\mathbf{A}_k, \mathbf{A}_j}}}{\|\mathbf{A}_k\|_2 \cdot \|\mathbf{A}_j\|_2}.
\end{equation}
Assume that $\mathbf{c}_0$ is a $s$-sparse vector in $\mathbb{C}^N$. Then, if
\begin{equation}\label{eq:BPMIC}
\mu \,\,<\,\,\frac{1}{2s-1},
\end{equation}
the solution to $(P_1)$ with $\mathbf{b}=\mathbf{A}\mathbf{c}_0$ is exactly $\mathbf{c}_0$, i.e.,
$$
 \mathbf{c}_0=\argmin{\mathbf{c}\in \C^N} \{\|\mathbf{c}\|_1\,\, \text{\rm subject to}\,\, \mathbf{A}\mathbf{c}=\mathbf{A}\mathbf{c}_0\}.
$$
The result was first presented in \cite{DonHuo} for the case with $\mathbf{A}$ being the union of two orthogonal matrices, and was extended to general matrices by Fuchs \cite{Fuchs} and Gribonval \& Nielsen \cite{GrNi}.  In \cite{CaWaXu}, Cai, Wang and Xu  show that
 $\mu < \frac{1}{2s-1}$ is sufficient for stably  approximating   $\mathbf{c}$ in the noisy case.

{\bf Restricted Isometry Property. (cf. \cite{CanTao05})}
We say that $\mathbf{A}\in \C^{m\times N}$ satisfies the Restricted Isometry Property (RIP) of order $s$ with constant $\delta_s
\in [0,1)$ if
\begin{equation}\label{eq:con}
(1-\delta_s) \|\mathbf{c}\|_2^2 \leq \|\mathbf{A} \mathbf{c} \|_2^2 \leq (1+\delta_s) \|\mathbf{c}\|_2^2
\end{equation}
holds for all vectors $\mathbf{c}\in \C^N$ with $\|\mathbf{c}\|_0\leq s,$ where $\|\mathbf{c}\|_0$ denotes the number of nonzero entries of $\mathbf{c}$.  In fact, (\ref{eq:con}) is equivalent to require that the
Grammian matrices $\mathbf{A}_T^\top\mathbf{A}_T$ has all of its eigenvalues in $[1-\delta_s,
1+\delta_s]$ for all $T$ with $\# T\leq s$, where $\mathbf{A}_T$ denotes the submatrix of $\mathbf{A}$ whose columns are those with indexes in $T$.

In \cite{candes,CRT}, it is shown that,  with certain RIP constants $\delta_s$, such as $\delta_{s}< 1/3$ \cite{CZ}, the vector
 $\mathbf{c}_0\in \C^N$ can be recovered in the following sense
 \begin{equation}\label{eq:bestk}
 \|\mathbf{c}^\#-\mathbf{c}_0\|_2\,\,\lesssim\,\, \frac{\sigma_{s,1}(\mathbf{c}_0)}{\sqrt{s}},
 \end{equation}
 where $\mathbf{c}^\#$ is given by the $\ell_1$-minimization problem $(P_1)$ with the vector $\mathbf{b}=\mathbf{A}\mathbf{c}_0$ for the given $\mathbf{c}_0\in \C^N$
 and
 \begin{equation}\label{eq:besterror}
 {\sigma_{s,1}(\mathbf{c}_0)}\,\,:=\,\, \min_{\mathbf{c}\in \C^N, \|\mathbf{c}\|_0\leq s}\|\mathbf{c}_0-\mathbf{c}\|_1.
 \end{equation}

{\bf Weil's Exponential Sum Theorem (cf. \cite{weil}).}
Suppose that $p$ is a prime number. Let $f(x)=m_1x+m_2x^2+\cdots+m_dx^d,$ and assume that there is a $j, \,1\leq j\leq d,$ such that $p\nmid m_j,$ then
\begin{align*}
\left| \sum_{x=1}^p e^{\frac{2\pi {\rm i} f(x)}{p}} \right|\,\,\leq\,\, (d-1)\sqrt{p}.
\end{align*}

{\bf High Dimensional Chebyshev Polynomials.}
We denote by $\phi_{n_j}(x_j)$ the one dimensional
$n_j$th order Chebyshev polynomial with respect to the variable $x_j$, i.e.,
\begin{align*}
\phi_{n_j}(x_j)\,\,=\,\,\cos(n_j\cdot \mathrm{arcos}(x_j)), \qquad x_j\in [-1,1].
\end{align*}
High dimensional Chebyshev polynomials can be constructed by tensorizing the one-dimensional polynomials. To do this,
let us first define the following multi-index:
\begin{align*}
 \mathbf{n}=(n_1,\ldots,n_d)\in \mathbb{N}^d, \quad \mathrm{with} \quad  |\mathbf{n}|=n_1+\cdot\cdot\cdot+n_d.
\end{align*}
With such definitions, every $d$-dimensional Chebyshev
polynomial in multi-variate $\mathbf{x}=(x_1,x_2,\ldots,x_d)$ can be written as
\begin{align*}
 \mathbf{\Phi}_\mathbf{n}(\mathbf{x})= \prod_{j=1}^d \phi_{n_j}(x_j).
\end{align*}
Given $q, d\in \N,$  we define the following index sets
$$
\Lambda_{\bf P}^{q,d}:=\{{\bf n}=(n_1,\ldots,n_d)\in \N^d:\max_{j=1,\ldots,d}n_j\leq q \},
$$
and
$$
\Lambda_{\bf D}^{q,d}:=\{{\bf n}=(n_1,\ldots,n_d)\in \N^d:\abs{{\bf n}}\leq q \}.
$$
Under the above notions, the traditional full {\em tensor product} (TP) space yields
\begin{align*}
\mathbf{P}_q^d \,\,:=\,\, {\rm span} \big\{\mathbf{\Phi}_\mathbf{n}(\mathbf{x}):  \mathbf{n}\in \Lambda_{\bf P}^{q,d} \big\}.
\end{align*}
That is, one requires in  $\mathbf{P}_q^d$ that the polynomial degree in each
variable less than or equal to $q.$ A simple observation is that  the dimension of $\mathbf{P}_q^d$ is
\begin{align*}
{\rm dim}(\mathbf{P}^d_q)=\#\Lambda_{\bf P}^{q,d}=(q+1)^d.
\end{align*}
Note that when $d\gg 1$ the dimension of TP polynomial spaces grows very fast with the polynomial degree $q$, which  is the so-called \textit{curse of dimensionality}. Thus, the TP spaces are rarely used in practice provided  $d \geq 5$ (see also \cite{XiuL1}). Therefore,
when $d$ is large, the following \textit{total degree} (TD)  polynomial space is often used instead of the TP space \cite{Fabio,XiuL1}
\begin{align*}
\mathbf{D}^d_q \,\,:=\,\, {\rm span} \big\{\mathbf{\Phi}_\mathbf{n}(\mathbf{x}):  \mathbf{n}\in \Lambda_{\bf D}^{q,d} \big\}.
\end{align*}
The dimension of $\mathbf{D}^d_q $ is
\begin{align*}
{\rm dim}(\mathbf{D}^d_q)=\#\Lambda_{\bf D}^{q,d}={q+d\choose d}.
\end{align*}
Note that, when $d\geq 2$,
$$
{q+d\choose d}=\left(\frac{q}{d}+1\right)\cdot\left(\frac{q}{d-1}+1\right)\cdots (q+1) < (q+1)^d.
$$
Hence, the growth of the dimension of  $\mathbf{D}_q^d$ is much slower than that of $\mathbf{P}_q^d$.

\section{The number of  unisolvent points for  sparse interpolation }

In this section, we focus on the theoretical limit of the number of  unisolvent points for  sparse interpolation.  Particularly,
 we try to give solutions to Problem 1, 2.
 Based on compressed sensing theory, the point set  $\{x_1,\ldots,x_m\}$ is   unisolvent for the
  $s$-sparse interpolation if and only if any $2s$ columns of the interpolation matrix $\mathbf{A}\in \C^{m\times N}$ are linearly independent.  And hence, a  lower bound is $m\geq 2s$.  In Section 3.1, we investigate Problem 1 and show that  the lower bound  $2s$ is sharp  provided that any $2s$ functions   in $\{B_j\}_{j\in \Lambda}$ are strongly linearly independent. We study Problem 2 in Section 3.2 and show that any $2s$
  distinct points in $\Omega$ are unisolvent if any $2s$ functions in  $\{B_j\}_{j=1}^N$ are a Chebyshev system.

\subsection{The lower bound $2s$ is sharp}

To this end, we first introduce the definition of strongly  linearly independent on $\Omega_0\subset \Omega$.
Given $\c=(c_1,\ldots,c_k)\in \C^k$ and $k$ functions $f_1,\ldots,f_k$ defined on $\Omega_0$, set
$$
{\mathcal I}_\c\,\,:=\,\, {\mathcal I}_\c(\Omega_0, f_1,\ldots,f_k)\,\,:=\,\,\{x\in \Omega_0 : \sum_{t=1}^k c_tf_{t}(x)=0 \}
$$
and denote by $\lambda^*_d$  the $d$-dimensional  Lebesgue outer measure.
We say that the $k$ functions $f_{1},\ldots,f_{k}$
are {\em strongly  linearly independent on $\Omega_0$} if
$$
\lambda^*_d({\mathcal I}_\c)\,\,>\,\,0
$$
implies that
$$
c_1=c_2=\cdots=c_k=0.
$$
To state the following lemma, we view
$$
{\bf x}\,=\,(x_1,\ldots,x_k)\,\in\, \underbrace{\Omega_0\times\cdots\times \Omega_0}_k\,=\,\Omega_0^k
$$
as a point in $\R^{d\cdot k}$.
Then, we  have
\begin{lemma}\label{le:strongly}
The following properties are equivalent:
\begin{enumerate}[{\rm (i)}]
\item The functions $f_1,\ldots,f_k$ are strongly linearly independent on $\Omega_0$.
\item For ${\bf x}:=(x_1,\ldots,x_k)\in \Omega_0^k$, set
$$
{\mathcal S}:=\{{\bf x}:=(x_1,\ldots,x_k)\in \Omega_0^k: \det( \mathbf{A}_{\bf x})=0\}\,\subset\, \R^{d\cdot k} ,
$$
where
$$
\mathbf{A}_{\bf x}:=[f_t(x_j)]_{j=1,\ldots,k;\, t=1,\ldots,k}.
$$
Then
$$
\lambda_{d\cdot k}^*({\mathcal S})=0.
$$
\end{enumerate}
\end{lemma}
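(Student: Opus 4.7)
The plan is to prove the two directions separately, with (ii) $\Rightarrow$ (i) being the quick contrapositive and (i) $\Rightarrow$ (ii) requiring a Fubini-style induction on $k$.

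For the easy direction (ii) $\Rightarrow$ (i), I would argue by contrapositive. Suppose $f_1,\ldots,f_k$ are not strongly linearly independent on $\Omega_0$, so there exists $\c=(c_1,\ldots,c_k)\ne 0$ with $\lambda_d^*(\mathcal{I}_\c)>0$. The key observation is that whenever all coordinates $x_1,\ldots,x_k$ lie in $\mathcal{I}_\c$, the nonzero vector $\c$ lies in the kernel of $\mathbf{A}_{\bf x}$, so $\det(\mathbf{A}_{\bf x})=0$. Thus $\mathcal{I}_\c^{\,k}\subseteq \mathcal{S}$, and since positive $d$-dimensional outer measure of $\mathcal{I}_\c$ forces positive $dk$-dimensional outer measure of $\mathcal{I}_\c^{\,k}$ (a standard property of product outer measure), we obtain $\lambda_{dk}^*(\mathcal{S})>0$, contradicting (ii).

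For (i) $\Rightarrow$ (ii), I would proceed by induction on $k$. For the base case $k=1$, $\det(\mathbf{A}_{\bf x})=f_1(x_1)$ and $\mathcal{S}=\mathcal{I}_{(1)}$, so strong linear independence with $\c=(1)$ directly gives $\lambda_d^*(\mathcal{S})=0$. For the inductive step, I would fix ${\bf x}':=(x_1,\ldots,x_{k-1})$ and expand the determinant along the last row, writing
$$\det(\mathbf{A}_{\bf x})\,=\,\sum_{t=1}^{k}(-1)^{k+t}M_t({\bf x}')\,f_t(x_k),$$
where $M_t({\bf x}')$ is the $(k-1)\times(k-1)$ minor obtained by deleting the $k$th row and $t$th column. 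Let $\mathcal{Z}:=\{{\bf x}'\in\Omega_0^{k-1}:M_t({\bf x}')=0 \text{ for all } t\}$. Since the $(k-1)$-tuple $\{f_s\}_{s\ne t}$ inherits strong linear independence from $\{f_1,\ldots,f_k\}$, the induction hypothesis applied to any single index $t$ gives $\lambda_{d(k-1)}^*(\mathcal{Z})=0$. For ${\bf x}'\notin\mathcal{Z}$, the coefficient vector in the expansion above is a nonzero element of $\C^k$, so strong linear independence of $f_1,\ldots,f_k$ makes the slice $\{x_k:\det(\mathbf{A}_{\bf x})=0\}$ have $d$-dimensional outer measure zero. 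A Fubini-Tonelli type bound then yields $\lambda_{dk}^*(\mathcal{S})\leq \lambda_{d(k-1)}^*(\mathcal{Z})\cdot\lambda_d^*(\Omega_0)+\int_{\Omega_0^{k-1}\setminus\mathcal{Z}}\lambda_d^*(\{x_k:\det(\mathbf{A}_{\bf x})=0\})\,d{\bf x}'=0$.

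The main obstacle is the (i) $\Rightarrow$ (ii) direction, specifically handling the degenerate slices where all minors $M_t({\bf x}')$ vanish simultaneously; the induction hypothesis lets me discard this degenerate set as measure zero, but one must be careful that Fubini-Tonelli is applicable at the level of outer measure. If the $f_j$ are Borel measurable (which is implicit in the settings of interest, namely polynomials or continuous functions), then $\mathcal{S}$ is a Borel set and the argument goes through with standard Lebesgue measure; otherwise one inflates the relevant sets to measurable supersets of the same outer measure before invoking Fubini, which is a routine adjustment.
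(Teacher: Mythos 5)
Your proof is correct, and your easy direction, (ii) $\Rightarrow$ (i), is exactly the paper's: the inclusion $\mathcal{I}_{\c}^{\,k}\subset\mathcal{S}$ plus positivity of the outer measure of a product. For (i) $\Rightarrow$ (ii), however, you take a genuinely different route. The paper argues directly by contradiction: if $\lambda^*_{dk}(\mathcal{S})>0$, it extracts a positive-outer-measure subset $\mathcal{S}'$ on which one fixed row of $\mathbf{A}_{\bf x}$ (say the last) lies in the span of the others, picks a nonzero kernel vector $\c^{\#}$ determined by $x_1,\ldots,x_{k-1}$ alone, and then uses a slicing step to find a fixed $(x_1',\ldots,x_{k-1}')$ whose $x_k$-slice of $\mathcal{S}'$ has positive outer measure and is contained in $\mathcal{I}_{\c^{\#}}$. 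Your induction on $k$ with Laplace expansion along the last row accomplishes the same reduction more explicitly: the vector of signed minors is precisely the kernel vector the paper invokes, and your inductive treatment of the degenerate set $\mathcal{Z}$ (all minors vanishing) replaces the paper's somewhat hand-wavy selection of $\mathcal{S}'$ and of the dependent row. What your version buys is transparency about where the nonzero coefficient vector comes from and a clean disposal of the degenerate slices; what the paper's version buys is brevity and no induction. Both proofs lean on the same Fubini-type step for outer measure (``all slices null implies the set is null''), which is false for arbitrary sets (Sierpi\'nski-type examples); you are right to flag this, though your proposed fix of inflating to measurable hulls does not actually repair it, since the hull's slices can be larger than the original ones. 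The honest resolution, which you also give, is that in every application the $f_j$ are continuous or Borel, so $\mathcal{S}$ is measurable and classical Fubini applies; the paper's own proof needs the same implicit assumption.
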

\begin{proof}
We first show (i) implies (ii). Suppose (ii) false, i.e.,
 $$
\lambda_{d\cdot k}^*({\mathcal S})>0,
$$
 and we shall derive a contradiction.  To state conveniently, we denote by  $\mathbf{A}_j$ the $j$th row of $\mathbf{A}_{\bf x}$.
 Let us keep in the mind that $\mathbf{A}_j$ only depends on the $x_j$.
 Then there exists a set
 $
 {\mathcal S'} \subset {\mathcal S}
 $
 with $\lambda_{d\cdot k}^*(S')>0$ and an integer, say $k$, such that when ${\bf x}\in {\mathcal S'}$, we have
 $$
 \mathbf{A}_{k}\in {\rm span}\{\mathbf{A}_1,\ldots,\mathbf{A}_{k-1}\}.
 $$
Hence,  the solution to
\begin{equation}\label{eq:sol}
\mathbf{A}_{\bf x} \c=0
\end{equation}
 is independent with $x_k$.
Note that $\lambda_{d\cdot k}^*({\mathcal S'})>0,$ and thus there exists a fixed $(x'_1,\ldots,x'_{k-1})\in \Omega_0^{k-1}$ such that
\begin{equation}\label{eq:xk}
\lambda_d^*(\{x_k\in \Omega_0 : (x'_1,\ldots,x'_{k-1},x_k)\in {\mathcal S'}\})\,\,>\,\,0.
\end{equation}
We  choose ${\bf x}:=(x'_1,\ldots,x'_{k-1},x_k)$ in (\ref{eq:sol}) and  take a non-zero  solution, say $\c^\#=(c_1^\#,\ldots,c_k^\#)$, which is independent with $x_k$ and only depends on $x'_1,\ldots,x'_{k-1}$. A simple observation is that, for any $x_k$ with $(x'_1,\ldots,x'_{k-1},x_k)\in {\mathcal S'}$ one has
$$
\sum_{t=1}^k c_t^\#f_{t}(x_k)=0,
$$
which implies that
\begin{equation}\label{eq:sub}
\{x_k\in \Omega_0 : (x'_1,\ldots,x'_{k-1},x_k)\in {\mathcal S'}\}\subset {\mathcal I}_{\c^\#}.
\end{equation}
Combining (\ref{eq:xk}) and (\ref{eq:sub}), we have
$$
\lambda_d^*({\mathcal I}_{\c^\#})>0,
$$
where $\c^\#$ is a non-zero vector.
This leads to a contradiction by the definition of strongly linearly independent.

We next show (ii) implies (i). Suppose (i) false, namely, there exists a non-zero vector  $\c$ such that
$$
\lambda_d^*({\mathcal I}_\c)>0.
$$
A simple observation is that
$$
{\mathcal I}_\c^k=\underbrace{{\mathcal I}_\c\times\cdots\times {\mathcal I}_\c}_k \subset {\mathcal S},
$$
which implies that
$$
\lambda_{d\cdot k}^*({\mathcal S})\,\,\geq\,\, \lambda_{d\cdot k}^*({\mathcal I}^k_\c)\,\,>\,\,0.
$$
This again leads to a contradiction.
\end{proof}
We are now ready to give the following theorem
\begin{theorem}\label{th:uni}
Suppose that $s\leq N/2$ and that there exits $\Omega_0\subset \Omega\subset \R^d$ such that any $2s$ functions in $\{B_j\}_{j\in \Lambda}$ are strongly  linearly independent on $\Omega_0$. Then there exist  $2s$ points $\{x_1,\ldots,x_{2s}\}\subset \Omega_0$
such that they are unisolvent for the $s$-sparse interpolation with the basis functions $\{B_j\}_{j\in \Lambda}$ and the domain $\Omega$.
\end{theorem}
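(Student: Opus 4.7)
The plan is to rephrase unisolvence as a submatrix nonsingularity condition, control the set of ``bad'' tuples of points via Lemma \ref{le:strongly}, and then take a finite union over the $\binom{N}{2s}$ choices of $2s$-subset of $\Lambda$.

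First I would record the equivalent formulation already used in Section 3: a set of distinct points $\{x_1,\ldots,x_{2s}\}\subset\Omega_0$ is unisolvent for the $s$-sparse interpolation if and only if, for every $T\subset\Lambda$ with $\#T=2s$, the $2s\times 2s$ matrix $[B_j(x_t)]_{t=1,\ldots,2s;\, j\in T}$ is nonsingular. Indeed, any $f,g\in\mathbf{U}^s$ with $f(x_t)=g(x_t)$ have $f-g=\sum_{j\in T_0}c_j B_j$ for some $T_0$ with $\#T_0\le 2s$; padding $T_0$ up to a set $T$ of size exactly $2s$ (possible since $s\le N/2$) and inserting zero coefficients, nonsingularity of the associated submatrix forces all $c_j=0$, so $f\equiv g$. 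Moreover the points are automatically distinct under this equivalent formulation, since any coincidence $x_i=x_k$ with $i\neq k$ would produce two identical rows and hence a singular submatrix.

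Second, for each $T\subset\Lambda$ with $\#T=2s$, the hypothesis says that $\{B_j\}_{j\in T}$ is strongly linearly independent on $\Omega_0$. By the implication (i)$\,\Rightarrow\,$(ii) of Lemma \ref{le:strongly}, the failure set
$$
\mathcal{S}_T := \bigl\{(x_1,\ldots,x_{2s})\in\Omega_0^{2s} : \det[B_j(x_t)]_{t,\,j\in T}=0\bigr\}
$$
satisfies $\lambda^*_{d\cdot 2s}(\mathcal{S}_T)=0$. There are only $\binom{N}{2s}$ such $T$, so the global failure set $\mathcal{S}:=\bigcup_T\mathcal{S}_T$ is a finite union of null sets and still has $\lambda^*_{d\cdot 2s}(\mathcal{S})=0$. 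Choosing any $(x_1,\ldots,x_{2s})\in\Omega_0^{2s}\setminus\mathcal{S}$ then produces a $2s$-tuple whose every $2s\times 2s$ submatrix is nonsingular, and Step 1 delivers unisolvence.

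The main obstacle, and the only nontrivial point, is guaranteeing that $\Omega_0^{2s}\setminus\mathcal{S}$ is nonempty, i.e.\ that $\lambda^*_{d\cdot 2s}(\Omega_0^{2s})>0$. This is the implicit content of the assumption on $\Omega_0$: if $\lambda^*_d(\Omega_0)=0$ then the strong linear independence hypothesis is vacuous and the construction collapses, whereas $\lambda^*_d(\Omega_0)>0$ makes $\Omega_0^{2s}$ have positive $(d\cdot 2s)$-dimensional outer measure and lets $\mathcal{S}$ be strictly avoided. Once this mild regularity on $\Omega_0$ is in hand, the remaining steps amount to the measure-theoretic bookkeeping supplied by Lemma \ref{le:strongly} combined with the finiteness of $N=\#\Lambda$.
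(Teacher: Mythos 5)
Your proposal is correct and follows essentially the same route as the paper's own proof: reduce unisolvence to nonsingularity of every $2s$-column submatrix, apply Lemma \ref{le:strongly} to each index set $T$ to get $\lambda^*_{d\cdot 2s}(\mathcal{S}_T)=0$, take the finite union over the $\binom{N}{2s}$ choices of $T$, and pick a tuple outside the union. Your two added observations --- the explicit justification of the equivalence via padding $T_0$ to size $2s$ using $s\le N/2$, and the remark that one implicitly needs $\lambda^*_d(\Omega_0)>0$ for $\Omega_0^{2s}\setminus\mathcal{S}$ to be nonempty --- are both sound and in fact tighten a point the paper passes over silently.
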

\begin{proof}
We consider the interpolation matrix
$$
\mathbf{A}=[B_{j}(x_k)]_{ 1\leq k\leq 2s,\, j\in \Lambda}.
$$
To this end, we only need prove that $\det(\mathbf{A}_T)\neq 0$ for any $T\subset \Lambda$ with $\#T=2s$,
where $\mathbf{A}_T$ denotes the submatrix of $\mathbf{A}$ whose columns are those with indexes in $T$.
Set
$$
{\mathcal S}_T:=\{(x_1,\ldots,x_{2s})\in \Omega_0^{2s}: \det (\mathbf{A}_T)=0\}.
$$
Since $\{B_j\}_{j\in T}$ are strongly linearly independent on $\Omega_0$,  by Lemma \ref{le:strongly}, we have
$$
\lambda_{2\cdot d \cdot s}^*({\mathcal S}_T)=0.
$$
To state conveniently, set
$$
\overline{\mathcal S}\,\,:=\,\,\bigcup_{\scriptstyle T\subset\{1,\ldots,N\}\atop\scriptstyle \#T=2s} {\mathcal S}_T.
$$
Noting that $\lambda_{2\cdot d\cdot s}^*(\overline{\mathcal S})=0$, we have
$$
\lambda_{2\cdot d \cdot s}^*\left(\Omega_0\setminus \overline{\mathcal S}\right)\,\,>\,\,0,
$$
which implies that $\Omega_0\setminus\overline{\mathcal S}$  is a nonempty set.
Therefore, we can choose
$$
\{x_1,\ldots,x_{2s}\}\subset \Omega_0\setminus \overline{\mathcal S},
$$
such that
$$
\det (\mathbf{A}_T)\neq 0, \quad {\rm for}\,\, {\rm all}\quad T\subset\{1,\ldots,N\}, \,\, \#T=2s,
$$
and this completes the proof.
\end{proof}
\begin{remark}
Suppose that $\Omega=[-1,1]$ and $B_j$ is a polynomial with degree $j$ on $\Omega$. Then, a simple observation is that any $2s$ functions
in  $\{B_j\}_{j=1}^N$ are strongly linearly independent on $\Omega$. Then, Theorem \ref{th:uni} implies that there are $2s$ points $\{x_1,\ldots,x_{2s}\}\subset \Omega$
such that they are unisolvent for the $s$-sparse interpolation with the basis functions $\{B_j\}_{j=1}^N$ and the domain $\Omega$.
In particular, if one takes $B_j$ as the one dimensional Chebyshev polynomial with degree $j$, the result implies that $2s$ points can be used to
theoretically recover $s$-sparse Chebyshev polynomial, which implies the result in \cite{prony}.
\end{remark}

We next show that, without the condition of strongly  linearly independent on some $\Omega_0$, it is possible that any  $2s$ points are not unisolvent. We consider the sparse interpolation for B-spline basis which is useful in  signal processing \cite{Bspline}. The first order B-spline on the interval $[j,j+1)$ is defined by
\begin{eqnarray*}
B_{1,j}(x)&:=& \begin{cases}
1, & j\leq x<{j+1},\\
0, & {\rm otherwise.}
\end{cases}
\end{eqnarray*}
 Then we consider the $s$-sparse interpolation for the basis functions $\{B_{1,j}\}_{j=1}^N$ and the
domain $\Omega=[1,N+1]$. A simple argument shows that $\{B_{1,j}\}_{j=1}^N$ are not strongly linearly independent on
any $\Omega_0\subset\Omega$.  Then, we have
\begin{proposition}
Suppose $s=1$. If the points $\{x_1,\ldots,x_m\}\subset \Omega$ are unisolvent for the $s$-sparse interpolation with the basis $\{B_{1,1},\ldots,B_{1,N}\}$
and the domain $\Omega=[1,N+1]$. Then $m\geq N$.
\end{proposition}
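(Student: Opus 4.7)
The plan is to proceed by contradiction using a simple pigeonhole argument on the supports of the B-splines. The crucial structural feature here is that each $B_{1,j}$ is supported on the disjoint interval $[j,j+1)$, so evaluating any basis function at a point $x$ yields information about exactly one index $j$ (namely the unique $j$ with $x\in[j,j+1)$) or no index at all if $x=N+1$.

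First I would suppose, for contradiction, that $m<N$. Since the intervals $[1,2), [2,3),\ldots,[N,N+1)$ are pairwise disjoint and the $m$ sample points $x_1,\ldots,x_m$ lie in $[1,N+1]$, each $x_i$ belongs to at most one of these $N$ intervals. By the pigeonhole principle there exists an index $j_0 \in \{1,\ldots,N\}$ such that no sample point lies in $[j_0, j_0+1)$.

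Next I would exhibit two distinct $1$-sparse functions that agree on every sample point. Take $f := B_{1,j_0}$ and $g := 0$. Both belong to $\mathbf{U}^1$: $f$ has a single nonzero coefficient, and $g$ has none. For each $i=1,\ldots,m$, since $x_i \notin [j_0, j_0+1)$, we have $f(x_i)=B_{1,j_0}(x_i)=0=g(x_i)$. Yet $f \not\equiv g$, because $f$ is identically $1$ on $[j_0, j_0+1)$ while $g$ is identically $0$. This contradicts the assumed unisolvency of $\{x_1,\ldots,x_m\}$ for $1$-sparse interpolation, so we must have $m \geq N$.

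There is no real obstacle in this argument; the only minor care is in handling the possibility that some $x_i$ equals $N+1$ (where all basis functions vanish, so such a point contributes no constraint whatsoever), but this only strengthens the pigeonhole conclusion since such an $x_i$ occupies no interval at all. The remark I would add is that this result sharply contrasts with Theorem \ref{th:uni}: the disjoint-support structure destroys the strong linear independence hypothesis on every $\Omega_0\subset\Omega$, and the minimal number of unisolvent points jumps from $2s=2$ all the way up to $N$, showing that the strong linear independence assumption in Theorem \ref{th:uni} cannot be dropped.
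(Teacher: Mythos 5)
Your proof is correct and follows essentially the same argument as the paper: assume $m<N$, find by pigeonhole an interval $[j_0,j_0+1)$ containing no sample point, and exhibit two distinct $1$-sparse functions agreeing at all sample points. The only cosmetic difference is your choice of the pair $B_{1,j_0}$ and $0$ where the paper uses $B_{1,j_0}$ and $2B_{1,j_0}$; both are valid.
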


\begin{proof}
To this end, we assume $m<N$. Then there exists $j_0\in [1,N]\cap \Z$, such that
 $$
 [j_0,j_0+1)\cap \{x_1,\ldots,x_m\}=\emptyset.
 $$
Thus,  $B_{1,j_0}(x_t)=0$ for $t=1,\ldots, m$. Therefore,
$$
B_{1,j_0}(x_t)=2B_{1,j_0}(x_t)=0,\quad \text{ for any } 1\leq t \leq m.
$$
Note that $B_{1,j_0}, 2B_{1,j_0}\in {\bf U}^1$ (see definition (\ref{eq:definition-U})) and $B_{1,j_0}\neq 2B_{1,j_0}$. Hence, $\{x_1,\ldots,x_m\}$ is not unisolvent.
As a result, $m\geq N$.
\end{proof}

\subsection{Sparse interpolation for Chebyshev systems}

We now turn to Problem 2.  We shall prove that
any $2s$ distinct points in $\Omega$ are unisolvent for the $s$-sparse interpolation provided that any $2s$ functions in  $\{B_j\}_{j=1}^N$ is Chebyshev system.
\begin{theorem}\label{th:chebyshev}
The following properties are equivalent:
\begin{enumerate}[{\rm (i)}]
\item  Suppose that  $f, g\in {\mathbf U}^s$ and
 $ x_1,\ldots,x_{2s}$ are any $2s$ distinct points in $\Omega$.
If
 $$
 f(x_j)=g(x_j),\qquad j=1,\ldots, 2s,
 $$
 then $f\equiv g$.
\item For any index set $T$ with $\# T= 2s$, the function system $\{B_j\}_{j\in T}$ is a Chebyshev system.
\end{enumerate}
\end{theorem}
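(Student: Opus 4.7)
The plan is to establish both implications by exploiting the simple observation that the difference $h := f-g$ of two elements of $\mathbf{U}^s$ can be written as $\sum_{j \in T} c_j B_j$ with $\#T \le 2s$, which is exactly the situation where the $2s$-Chebyshev-system hypothesis of (ii) applies. The evaluation conditions $f(x_k)=g(x_k)$ translate into a homogeneous linear system whose matrix is (a column subset of) the interpolation matrix restricted to a $2s$-index set, so the whole argument reduces to manipulating determinants of submatrices of the interpolation matrix.

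For (ii) $\Rightarrow$ (i), I would start from $f,g \in \mathbf{U}^s$ agreeing on $2s$ distinct points $x_1,\ldots,x_{2s}$. Writing $f-g = \sum_{j \in T} c_j B_j$ with $\#T \le 2s$ and padding $T$ with arbitrary additional indices from $\Lambda$ (setting $c_j := 0$ for each added index) until $\#T = 2s$, the evaluation hypothesis becomes $\sum_{j \in T} c_j B_j(x_k)=0$ for $k=1,\ldots,2s$. The coefficient vector $(c_j)_{j \in T}$ therefore lies in the kernel of the $2s \times 2s$ matrix $[B_j(x_k)]_{k,j \in T}$. By (ii), this matrix has nonzero determinant, which forces every $c_j = 0$ and hence $f \equiv g$.

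For the converse (i) $\Rightarrow$ (ii), I would argue by contrapositive. If (ii) fails, there is an index set $T$ with $\#T = 2s$ and $2s$ distinct points $x_1,\ldots,x_{2s} \in \Omega$ making $\det[B_j(x_k)]_{k,j \in T}=0$; equivalently, there is a nonzero vector $\mathbf{c}=(c_j)_{j \in T}$ with $\sum_{j \in T} c_j B_j(x_k)=0$ for all $k$. Partitioning $T = T_1 \sqcup T_2$ with $\#T_1 = \#T_2 = s$ and setting $f := \sum_{j \in T_1} c_j B_j$ and $g := -\sum_{j \in T_2} c_j B_j$ produces two elements of $\mathbf{U}^s$ that coincide on every $x_k$. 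The main delicate step is to ensure $f \not\equiv g$ as functions, equivalently $\sum_{j \in T} c_j B_j \not\equiv 0$; this is the only place where something beyond elementary linear algebra is needed, and I would dispose of it by invoking the (implicit and standard) linear independence of the family $\{B_j\}_{j \in \Lambda}$, which rules out any functional dependence among the $B_j$ with $j \in T$ and thereby supplies the required counterexample to (i).
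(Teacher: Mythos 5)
Your proof follows essentially the same route as the paper's: for (ii)$\Rightarrow$(i) you reduce the agreement of $f$ and $g$ at the $2s$ points to a homogeneous $2s\times 2s$ system whose matrix is nonsingular by the Chebyshev-system hypothesis (your explicit padding of $T$ up to cardinality $2s$ is a slightly more careful version of the paper's ``without loss of generality $T_0\cap T_1=\emptyset$''), and for (i)$\Rightarrow$(ii) you argue by contrapositive, splitting the degenerate index set into two halves to manufacture $f,g\in\mathbf{U}^s$ that agree on the $x_k$. The only divergence is at the step $f\not\equiv g$: you invoke linear independence of $\{B_j\}_{j\in\Lambda}$ as an implicit standing hypothesis, whereas the paper asserts that statement (i) by itself forces any $2s$ of the $B_j$ to be linearly independent --- a claim it does not justify and which fails in general (a repeated basis function leaves (i) intact while destroying both linear independence and (ii)), so your explicitly imported assumption is the more honest reading and is in fact needed for the stated equivalence to hold at all.
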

\begin{proof}
We first show (i) implies (ii). Suppose (ii) is false, namely, there exists an index set  $T=\{j_1,\ldots,j_{2s}\}\subset [1,N]$, and a
set of distinct points $\{x_1,\ldots,x_{2s}\}\subset \Omega$ such that
$${\det}[B_j(x_k)]_{k=1,\ldots,2s, j\in T}\,\,=\,\,0,$$
 which implies that there exists $[c_{j_1},\ldots,c_{j_{2s}}]\neq 0$
such that
\begin{equation}\label{eq:xiangdeng}
\sum_{t=1}^{2s}c_{j_t}B_{j_t}(x_k)=0, \qquad k=1,\ldots,2s.
\end{equation}
Set
$$
f=\sum_{t=1}^{s}c_{j_t}B_{j_t}, \qquad g=-\sum_{t=s+1}^{2s}c_{j_t}B_{j_t}.
$$
Statement (i) implies that any $2s$ functions in $\{B_j\}_{j=1}^N$ are linearly independent and hence
$f\not\equiv g$.
Also, according to (\ref{eq:xiangdeng}) we have
$$
f(x_j)\,\,=\,\,g(x_j),\qquad j=1,\ldots,2s,
$$
which implies $f\equiv g$. This leads to a contradiction.

We next show (ii) implies (i). We take $f, g\in {\mathbf U}^s$ with
$$
f(x_k)=g(x_k),\qquad k=1,\ldots,2s.
$$
To this end, we suppose that $f\not\equiv g$.
We write $f,g$ in the form of
\begin{equation*}
f=\sum_{j\in T_0}c_jB_j,\qquad g=\sum_{j\in T_1}c_jB_j
\end{equation*}
with $\#T_0=\#T_1=s$. Without loss of generality, we suppose $T_0\cap T_1=\emptyset$. Then $f(x_k)=g(x_k)$ implies that
\begin{equation}\label{eq:min}
\sum_{j\in T_0}c_jB_j(x_k)-\sum_{j\in T_1}c_jB_j(x_k)=0, \qquad k=1,\ldots, 2s.
\end{equation}
The fact that ${\rm det}[B_j(x_k)]\neq 0$ implies that the solution to (\ref{eq:min}) is 0, and this contradicts to $f\not\equiv  g$.
\end{proof}

\begin{remark}
We list in the following some function systems which satisfy  {\rm(ii)} in Theorem \ref{th:chebyshev}.
\begin{enumerate}
\item We take $B_j(x)=e^{\lambda_j x}, j=1,\ldots,N$ and $\Omega=\R$, where $\lambda_1<\lambda_2<\cdots<\lambda_N$.
\item We take $B_j(x)=x^{\lambda_j}, j=1,\ldots,N$ and $\Omega=(0, \infty)$, where $\lambda_1<\lambda_2<\cdots<\lambda_N$.
\item We take $B_j(x)=(x+\lambda_j)^{-1}, j=1,\ldots,N$ and $\Omega=(0, \infty)$, where $0\leq \lambda_1<\lambda_2<\cdots<\lambda_N$.
\end{enumerate}
\end{remark}

\section{Deterministic sampling for sparse high dimensional Chebyshev polynomials}
Throughout the rest of this paper, we consider sparse interpolation for the high dimensional Chebyshev polynomials on the domain $\Omega=[-1,1]^d$. Recall that we use $\phi_{n_j}(x_j)$ to denote the one dimensional Chebyshev polynomial in variable $x_j$ with degree $n_j$ and
\begin{align*}
 \mathbf{\Phi}_\mathbf{n}(\mathbf{x})= \prod_{j=1}^d \phi_{n_j}(x_j).
\end{align*}
Note that, for any finite index set $\Lambda\subset \N^d$, the functions $\{{\bf \Phi}_{\bf n}\}_{{\bf n}\in \Lambda}$ are
strongly linearly independent on $[-1,1]^d$. Then, Theorem \ref{th:uni} implies that there are $2s$ points $\{x_1,\ldots,x_{2s}\}\subset [-1,1]^d$
such that they are unisolvent. A well-known result in approximation theory is that there is no Chebyshev systems of continuous functions on $[-1,1]^d$ provided $d\geq 2$ \cite{AT}. Therefore, based on Theorem \ref{th:chebyshev}, one cannot hope that an arbitrary pointset with $2s$ distinct points on  $ [-1,1]^d$ is unisolvent when $d\geq 2$. So, we shall identify $m\geq 2s$ points  $\{x_1,\ldots,x_m\}\subset [-1,1]^d$ such that one can recover $f\in {\mathbf U}^s$ efficiently from $\{(x_j,f(x_j))\}_{j=1}^m.$

We next present a deterministic point set for the $s$-sparse interpolation in high dimensional Chebyshev polynomial spaces. Suppose that $M$ is a prime number. We  define the point set $\Theta_M\subset [-1,1]^d$ as follows
\begin{align*}
\Theta_M:=\left\{{\bf x}_j=\cos( {\bf p}_j) : {\bf p}_j=2\pi\left(j,j^2,\ldots,j^d\right)/M, \,\,\, j=0,\ldots, \lfloor M/2 \rfloor \right\}.
\end{align*}
To state conveniently, throughout the rest of this paper, we set
$$
m:=\# \Theta_M=\lfloor M/2 \rfloor+1.
$$
The next lemma shows the reason why one takes $0\leq j\leq\lfloor M/2 \rfloor$ instead of $0 \leq j \leq M$ in the definition of $\Theta_M$.
\begin{lemma}\label{le:mid}
For any integer $M$ and $m=\lfloor M/2 \rfloor+1$, we have
$$
\cos(2\pi j^k/M)=\cos(2\pi (M-j)^k/M), \quad \text{ for all } \, k\in \mathbb{N} \,\, \text{and} \,\,  0\leq j\leq m-1.
$$
\end{lemma}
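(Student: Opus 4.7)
The identity is really a statement about congruences modulo $M$, combined with the even/periodic nature of cosine, so the plan is to reduce everything to the binomial expansion of $(M-j)^k$.

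First I would apply the binomial theorem to write
\begin{equation*}
(M-j)^k \;=\; \sum_{i=0}^{k} \binom{k}{i} M^{k-i}(-j)^i \;=\; (-j)^k + M\cdot L,
\end{equation*}
where $L=\sum_{i=0}^{k-1}\binom{k}{i}M^{k-1-i}(-j)^i\in\Z$, since every term with $i<k$ retains at least one factor of $M$. Dividing by $M$ gives $(M-j)^k/M = (-1)^k j^k/M + L$.

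Next I would invoke the fact that $\cos$ is $2\pi$-periodic and even. Periodicity removes the integer contribution $2\pi L$, yielding
\begin{equation*}
\cos\!\left(\tfrac{2\pi (M-j)^k}{M}\right) \;=\; \cos\!\left(\tfrac{2\pi (-1)^k j^k}{M}\right),
\end{equation*}
and evenness then converts the sign factor $(-1)^k$ into a trivial one, giving $\cos(2\pi j^k/M)$ as required. Note that this argument in fact works for any integer $j$; the restriction $0\le j\le m-1$ in the statement is only there because it is the relevant range for the construction of $\Theta_M$ (it pairs each $j\in\{1,\ldots,\lfloor M/2\rfloor\}$ with its distinct partner $M-j\in\{\lceil M/2\rceil,\ldots,M-1\}$, thereby justifying why indices beyond $\lfloor M/2\rfloor$ would only duplicate points already in $\Theta_M$).

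There is no real obstacle here; the only mild subtlety is remembering that one needs both the periodicity of cosine (to discard $2\pi L$) and its evenness (to absorb $(-1)^k$), since a priori $(-j)^k \not\equiv j^k \pmod{M}$ when $k$ is odd.
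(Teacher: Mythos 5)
Your proof is correct and follows essentially the same route as the paper: a binomial expansion of $(M-j)^k$ to isolate the residue modulo $M$, followed by the periodicity and evenness of cosine. The only cosmetic difference is that you carry the sign as $(-1)^k$ and absorb it with evenness in one stroke, whereas the paper splits into the cases $k$ even and $k$ odd.
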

\begin{proof}
We first consider the case with $k$ being an even number.
Note that
$$
(M-j)^k=\sum_{t=0}^{k-1}{k\choose t}M^{k-t}(-j)^t+j^k,
$$
and hence
$$
(M-j)^k-j^k\,\,=\,\,0   \mod M,
$$
which implies
$$
\cos(2\pi j^k/M)=\cos(2\pi (M-j)^k/M).
$$
We next turn to the case with $k$ being an odd number. Using similar derivations, we have
$$
(M-j)^k=\sum_{t=0}^{k-1}M^{k-t}(-j)^t-j^k.
$$
Then
$$
(M-j)^k+j^k\,\,=\,\,0   \mod M.
$$
Thus we obtain
$$
\cos(2\pi j^k/M)=\cos(2\pi (M-j)^k/M).
$$
This completes the proof.
\end{proof}

\subsection{Spares interpolation in TP Chebyshev polynomial spaces}

We now choose the point set $\Theta_M$ as the interpolation points for the $s$-sparse interpolation with the  function system $\{\mathbf{\Phi}_\mathbf{n}:  \mathbf{n}\in \Lambda_{\bf P}^{q,d} \}$ and  employ  the
$\ell_1$ minimization as the recovery method.
Recall that
${\bf p}_j=2\pi\left(j,j^2,\ldots,j^d\right)/M$.
Note that
$$
\mathbf{\Phi}_\mathbf{n}({\bf x}_j)=\mathbf{C}_\mathbf{n}(\mathbf{p}_j),
$$
where
\begin{align*}
\mathbf{C}_{\mathbf{n}}(\mathbf{p}_j):=\prod_{t=1}^d \cos(2\pi n_tj^t/M).
\end{align*}
Then the interpolation matrix is
\begin{align}\label{eq:TPA}
\mathbf{A_P}\,\,:=\,\,\big[\mathbf{C}_\mathbf{n}(\mathbf{p}_j)\big]_{j=1,\ldots,m; \, {\bf n}\in \Lambda_{{\bf P}}^{q,d}}\,\,\in\,\, \R^{m\times (q+1)^d}.
\end{align}
The following lemma gives an estimation of $\mu(\mathbf{A_P})$.
\begin{lemma}\label{le:MICA}
Suppose that $M\geq \max\{2q+1, (2^d(d-1))^2\}$ is a prime number.
Then
\begin{align*}
\mu(\mathbf{A_P})\leq \frac{1}{\sqrt{M}} \frac{2^d\cdot d}{1-\frac{2^{d}(d-1)}{\sqrt{M}}}.
\end{align*}
\end{lemma}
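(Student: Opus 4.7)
The strategy is to reduce the entries of $\mathbf{A_P}^\top \mathbf{A_P}$ to exponential sums in a single variable whose size can be controlled by Weil's theorem. First I apply the product-to-sum identity $\cos\alpha\cos\beta=\tfrac12(\cos(\alpha+\beta)+\cos(\alpha-\beta))$ to each of the $d$ factors in $\mathbf{C}_\mathbf{n}(\mathbf{p}_j)\mathbf{C}_{\mathbf{n}'}(\mathbf{p}_j)$ and then expand each resulting $d$-fold product of cosines via
\[
\prod_{t=1}^d \cos\theta_t \,=\, \frac{1}{2^d}\sum_{\eta\in\{\pm 1\}^d}\cos\!\Bigl(\sum_{t=1}^d\eta_t\theta_t\Bigr).
\]
This yields
\[
\mathbf{C}_\mathbf{n}(\mathbf{p}_j)\mathbf{C}_{\mathbf{n}'}(\mathbf{p}_j)=\frac{1}{2^{2d}}\sum_{\epsilon,\eta\in\{\pm 1\}^d}\cos\!\Bigl(\frac{2\pi}{M}f_{\epsilon,\eta}(j)\Bigr),
\]
with $f_{\epsilon,\eta}(j):=\sum_{t=1}^d\eta_t(n_t+\epsilon_tn'_t)j^t$, a polynomial in $j$ of degree at most $d$ and no constant term.

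Because $0\le n_t,n'_t\le q$ and $M\ge 2q+1$, each coefficient $\eta_t(n_t+\epsilon_tn'_t)$ is an integer of absolute value at most $2q<M$, so it vanishes modulo $M$ iff it vanishes as an integer, which forces $n_t+\epsilon_tn'_t=0$. For the off-diagonal case $\mathbf{n}\neq\mathbf{n}'$, pick any coordinate $t$ with $n_t\neq n'_t$: neither $\epsilon_t=+1$ (which would require $n_t=-n'_t$, impossible for distinct nonnegative indices) nor $\epsilon_t=-1$ (which would require $n_t=n'_t$) can annihilate the coefficient. Hence \emph{every} one of the $2^{2d}$ polynomials $f_{\epsilon,\eta}$ has at least one coefficient not divisible by the prime $M$, so Weil's theorem bounds each inner exponential sum by $(d-1)\sqrt{M}$. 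Summing and applying Lemma \ref{le:mid} to convert to the half-range via $\sum_{j=0}^{m-1}a_j=\tfrac12\bigl(\sum_{j=0}^{M-1}a_j+a_0\bigr)$ with $a_0=\mathbf{C}_\mathbf{n}(\mathbf{p}_0)\mathbf{C}_{\mathbf{n}'}(\mathbf{p}_0)=1$ gives
\[
|\langle \mathbf{A}_\mathbf{n},\mathbf{A}_{\mathbf{n}'}\rangle|\le \tfrac12\bigl((d-1)\sqrt{M}+1\bigr)\le \tfrac{d\sqrt{M}}{2}.
\]

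For the diagonal $\mathbf{n}=\mathbf{n}'$ the same expansion reappears, but now $n_t+\epsilon_tn_t=(1+\epsilon_t)n_t$ vanishes modulo $M$ whenever $\epsilon_t=-1$ or $n_t=0$. In the worst case where all $n_t\neq 0$, exactly $2^d$ of the $2^{2d}$ pairs $(\epsilon,\eta)$ are ``resonant'' ($\epsilon=(-1,\ldots,-1)$ with $\eta$ arbitrary) and each contributes the value $M$ to the inner sum, while the remaining pairs are bounded by $(d-1)\sqrt{M}$ via Weil. Hence
\[
\sum_{j=0}^{M-1}\mathbf{C}_\mathbf{n}(\mathbf{p}_j)^2\ge \frac{M}{2^d}-(d-1)\sqrt{M},
\]
and the half-range conversion yields $\|\mathbf{A}_\mathbf{n}\|_2^2\ge \frac{M}{2^{d+1}}-\frac{(d-1)\sqrt{M}}{2}=\frac{M}{2^{d+1}}\bigl(1-\frac{2^d(d-1)}{\sqrt{M}}\bigr)$, which is strictly positive by the assumption $M\ge(2^d(d-1))^2$.

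Combining the two estimates gives
\[
\mu(\mathbf{A_P})\le \frac{d\sqrt{M}/2}{\frac{M}{2^{d+1}}\bigl(1-2^d(d-1)/\sqrt{M}\bigr)}=\frac{1}{\sqrt{M}}\cdot\frac{2^d d}{1-\frac{2^d(d-1)}{\sqrt{M}}},
\]
which is the stated inequality. The main obstacle I anticipate is the combinatorial bookkeeping of the ``resonant'' pairs $(\epsilon,\eta)$: one must verify that the hypothesis $M\ge 2q+1$ rules out every accidental vanishing mod $M$ of $n_t\pm n'_t$ in the off-diagonal case, and must correctly isolate the $2^d$ forced coincidences on the diagonal, so that the precise constants $2^d d$ and $2^d(d-1)$ in the final bound emerge cleanly from the $2^{2d}$-fold sum.
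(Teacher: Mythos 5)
Your proposal is correct and follows essentially the same route as the paper: product-to-sum expansion of $\mathbf{C}_\mathbf{n}(\mathbf{p}_j)\mathbf{C}_{\mathbf{n}'}(\mathbf{p}_j)$ into cosines of polynomials in $j$, Weil's bound on each non-degenerate exponential sum, Lemma \ref{le:mid} for the half-range reduction, and the same column-norm lower bound $\frac{M}{2^{d+1}}-\frac{(d-1)\sqrt{M}}{2}$. Your version is in fact slightly more careful than the paper's in spelling out why $M\geq 2q+1$ guarantees a coefficient not divisible by $M$ and in counting the resonant terms on the diagonal.
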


\begin{proof}
Let us first consider
\begin{align}
\left |\sum_{j=0}^{m-1} \mathbf{C}_{\bf n}(\mathbf{p}_j)\mathbf{C}_{\bf k}(\mathbf{p}_j)\right|,
\end{align}
where ${\bf n}, {\bf k} \in \Lambda_{{\bf P}}^{q,d}$ and ${\bf n}\neq {\bf k}$.
By repeatedly using  the fact $\cos(\alpha)\cos(\beta)=\frac{1}{2}(\cos(\alpha+\beta)+\cos(\alpha-\beta))$,  we have
\begin{eqnarray*}
\mathbf{C}_{\mathbf{n}}({\bf p}_j)\mathbf{C}_{\mathbf{k}}({\bf p}_j)
&=&\prod_{t=1}^d \cos(2\pi n_t j^t/M)\cos(2\pi k_t j^t/M)\\
&=&\frac{1}{2^{2d-1}}\sum_{\epsilon\in \{-1,1\}^{2d-1}} \cos(\mathbf{t}(\epsilon,\mathbf{p}_j)),
\end{eqnarray*}
where
\begin{align*}
\mathbf{t}(\epsilon,\mathbf{p}_j)=2\pi((n_1+\epsilon_1k_1)j+(\epsilon_2n_2+\epsilon_3k_2)j^2+ \cdots+ (\epsilon_{2d-2}n_d+\epsilon_{2d-1}k_d)j^d)/M.
\end{align*}
 Note that there are totally $2^{2d-1}$ possible $\epsilon$.
The Weil's theorem implies that for a fixed $\epsilon\in \{-1,1\}^d$, there holds
\begin{align}
\left|\sum_{j=1}^M \cos(\mathbf{t}(\epsilon,\mathbf{p}_j)) \right| \leq \left|\sum_{j=1}^M \exp({\rm i}\mathbf{t}(\epsilon,\mathbf{p}_j)) \right| \leq (d-1)\sqrt{M}.
\end{align}
Based on Lemma \ref{le:mid}, we have
\begin{align*}
2\sum_{j=0}^{m-1}\cos(\mathbf{t}(\epsilon,\mathbf{p}_j)) - 1=\sum_{j=1}^M \cos(\mathbf{t}(\epsilon,\mathbf{p}_j)).
\end{align*}
Thus, we have
\begin{align}
\left|\sum_{j=0}^{m-1}\cos(\mathbf{t}(\epsilon,\mathbf{p}_j)) \right|\leq \frac{(d-1)\sqrt{M}+1}{2},
\end{align}
which implies
\begin{align}\label{eq:mk}
\left|\sum_{j=0}^{m-1} \mathbf{C}_{\bf n}(\mathbf{p}_j)\mathbf{C}_{\bf k}(\mathbf{p}_j)\right|\leq \frac{(d-1)\sqrt{M}+1}{2}.
\end{align}
Let us now consider the column norm of $\mathbf{A_P}$. Using repeatedly the fact $\cos(2\alpha)=2\cos^2(\alpha)-1$ and the similar procedure above, we can obtain that
\begin{align*}
\sum_{j=1}^M \left|\mathbf{C}_{\mathbf{n}}({\bf p}_j)\right|^2\geq \frac{M}{2^d}-{(d-1)\sqrt{M}}.
\end{align*}
Based on Lemma \ref{le:mid}, we have
\begin{align}\label{eq:norm}
\sum_{j=0}^{m-1} \left|\mathbf{C}_{\mathbf{n}}({\bf p}_j)\right|^2\geq \frac{M}{2^{d+1}}-\frac{(d-1)\sqrt{M}}{2}.
\end{align}
Then the desired  result can be obtained by combining (\ref{eq:mk}) and (\ref{eq:norm}).
\end{proof}
We now arrive at one of the main results of this paper, which shows that one can use $\ell_1$ minimization to   recover
$f\in {\mathbf U}^s(\{\mathbf{\Phi}_\mathbf{n}:  \mathbf{n}\in \Lambda_{\bf P}^{q,d} \})$ from the function values
of the points $\mathbf{x}_j\in \Theta_M$.

\begin{theorem}\label{th:TP}
Suppose that $M \geq \max\{2q + 1, 9\cdot 4^{d}\cdot d^2\cdot s^2\}$ is a prime number.
Let
  $$
  f =\sum_{{\mathbf n}\in \Lambda_{\bf P}^{q,d} } c_{{\mathbf n}}\mathbf{\Phi}_\mathbf{n},
  $$
and assume that $\c^\#$ is given by the $\ell_1$-minimization problem (\ref{eq:P1}) with the matrix $\mathbf{A}:=\mathbf{A_P}\cdot \mathbf{C}$ and
the vector $\mathbf{b}:=(f(x_1),\ldots,f(x_m))^\top$, where
$x_j\in \Theta_M$,  $m=\lfloor M/2 \rfloor+1$ and ${\mathbf C}$ is a diagonal matrix such that  the  columns of $\mathbf{A}$
are standardized to have unit $\ell_2$ norm. Then we have
\begin{equation}\label{eq:21}
\|\mathbf{c}^\#-\mathbf{c}\|_2\,\,\lesssim\,\,   \frac{\sigma_{s,1}(\mathbf{c})}{\sqrt{s}}.
\end{equation}
\end{theorem}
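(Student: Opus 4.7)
The plan is to combine the mutual-incoherence estimate in Lemma~\ref{le:MICA} with the stable-recovery criterion for $\ell_1$ minimization quoted after \eqref{eq:BPMIC}, due to Cai, Wang and Xu~\cite{CaWaXu}. The first observation is that the mutual incoherence constant is invariant under right multiplication by a positive diagonal matrix, since columns are normalized in the very definition \eqref{eq:MIC}; in particular $\mu(\mathbf{A}) = \mu(\mathbf{A_P}\mathbf{C}) = \mu(\mathbf{A_P})$, so Lemma~\ref{le:MICA} immediately yields a bound on $\mu(\mathbf{A})$.

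The first substantive step is to verify that the hypothesis $M \geq 9\cdot 4^d\cdot d^2\cdot s^2$ forces $\mu(\mathbf{A}) < \frac{1}{2s-1}$. Noting that the threshold factors as $(3\cdot 2^d\cdot d\cdot s)^2$, one has $\sqrt{M} \geq 3\cdot 2^d\cdot d\cdot s$, from which both $\frac{2^d(d-1)}{\sqrt{M}} \leq \frac{1}{3s}\leq \frac{1}{3}$ and $\frac{2^d d}{\sqrt{M}} \leq \frac{1}{3s}$. Substituting into Lemma~\ref{le:MICA} gives
\[
\mu(\mathbf{A}) \,\leq\, \frac{1}{3s}\cdot \frac{1}{1-1/3} \,=\, \frac{1}{2s} \,<\, \frac{1}{2s-1}.
\]
It is also worth checking that the assumption $M \geq 2q+1$ (together with $M\geq (2^d(d-1))^2$, which follows a fortiori from the stated lower bound) is exactly what is required to invoke Lemma~\ref{le:MICA}.

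The second step is a direct application of the Cai--Wang--Xu stability result: since $\mathbf{A}$ has unit-norm columns and $\mu(\mathbf{A})<\frac{1}{2s-1}$, the $\ell_1$ minimizer $\mathbf{c}^\#$ of $(P_1)$ with $\mathbf{b} = \mathbf{A}(\mathbf{C}^{-1}\mathbf{c})$ satisfies a robust bound of the type \eqref{eq:bestk} against the feasible vector $\mathbf{C}^{-1}\mathbf{c}$. Since the diagonal entries of $\mathbf{C}$ are bounded away from $0$ and $\infty$ --- the lower bound coming from the column-norm estimate \eqref{eq:norm} and the upper bound from $|\mathbf{\Phi}_\mathbf{n}|\leq 1$, both valid uniformly in $\mathbf{n}$ once $M$ exceeds the stated threshold --- passing back and forth between $\mathbf{c}$ and $\mathbf{C}^{-1}\mathbf{c}$ affects only the implicit constant hidden in the symbol $\lesssim$, and \eqref{eq:21} follows.

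The essential technical work --- namely, controlling the cross-correlations of columns of $\mathbf{A_P}$ via Weil's exponential sum bound --- has already been carried out in Lemma~\ref{le:MICA}. What remains for the proof is the short algebraic verification of the incoherence threshold and the bookkeeping associated with the column normalization; neither step presents any real obstacle, so the argument is essentially a one-paragraph deduction from Lemma~\ref{le:MICA} and the $\mu<\frac{1}{2s-1}$ stability theorem of \cite{CaWaXu}.
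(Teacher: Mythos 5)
Your argument is correct, but it closes the proof by a genuinely different route than the paper. The paper never invokes the $\mu<\frac{1}{2s-1}$ criterion of \cite{CaWaXu} in this proof: it converts the incoherence bound of Lemma \ref{le:MICA} into an RIP bound via the Gershgorin-type estimate $\delta_s\leq (s-1)\mu(\mathbf{A})$ (citing \cite{MR1,MR2}), verifies that the hypothesis on $M$ forces $(s-1)\mu(\mathbf{A_P})<1/3$, and then applies the RIP-based stability theorem of \cite{CZ} to obtain (\ref{eq:21}). Your route skips the RIP detour entirely and is arithmetically sound: from $M\geq(3\cdot 2^d\cdot d\cdot s)^2$ one indeed gets $\mu(\mathbf{A})=\mu(\mathbf{A_P})\leq\frac{1}{3s}\cdot\frac{3}{2}=\frac{1}{2s}<\frac{1}{2s-1}$, and the invariance of $\mu$ under positive diagonal column scaling is immediate from (\ref{eq:MIC}); the paper's own verification rests on exactly the same factorization of the threshold. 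You are also more careful than the paper on a point it silently elides: the program $(P_1)$ with $\mathbf{A}=\mathbf{A_P}\mathbf{C}$ and $\mathbf{b}=\mathbf{A_P}\c$ targets $\mathbf{C}^{-1}\c$ rather than $\c$, and your observation that the diagonal entries of $\mathbf{C}$ are bounded above and below (via (\ref{eq:norm}) and $\abs{\mathbf{\Phi}_{\mathbf n}}\leq 1$) is precisely the bookkeeping needed to transfer the estimate --- though the resulting implicit constant then depends on $d$. One caveat applies to both routes equally: since $\mu$ is only guaranteed to be $\leq\frac{1}{2s}$, the quantity $1-(2s-1)\mu$ may be as small as $\frac{1}{2s}$, so the Cai--Wang--Xu stability constant is not absolute but can grow with $s$; the paper's version has the analogous defect, since its $\delta_s$ may approach $1/3$ to within $O(1/s)$. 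So your proof is valid at the same level of rigor as the paper's, just via the mutual-incoherence recovery theorem rather than the RIP one.
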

\begin{proof}
 According to the results in  \cite{MR1, MR2}, the matrix ${\mathbf A}$
 satisfies $s$-order RIP property with RIP constant
 $$
 \delta_s\leq (s-1)\mu({\mathbf A})=(s-1)\mu({\mathbf A_P}).
 $$
The result in \cite{CZ} shows that (\ref{eq:21}) holds provided $\delta_s<1/3$.
According to Lemma \ref{le:MICA}, if $M \geq \max\{2q + 1,  9\cdot 4^{d}\cdot d^2\cdot s^2\}$, then
 $$
 \delta_s\leq (s-1)\cdot \mu({\mathbf A_P}) <1/3,
 $$
  which implies  (\ref{eq:21}).
\end{proof}
\begin{remark}
If we suppose that $f\in {\mathbf U}^s$ in Theorem \ref{th:TP}, i.e., the vector $\c$  is $s$-sparse,  then
Theorem \ref{th:TP} implies that $(P_1)$ can recover the $s$-sparse function $f$ exactly from the function values on
the point set $\Theta_M$ provided  the number of interpolation points  $m \geq \max\{q , 9/2\cdot 4^{d}\cdot d^2\cdot s^2\}+1$ and $M$ is
a prime number.
\end{remark}
\subsection{Sparse interpolation in TD Chebyshev polynomial spaces}

In uncertainty quantification, the dimension $d$ is often determined by the number of random parameters and can be very large. One often encounters practical stochastic problems with the dimension $d$ on the order of hundreds \cite{Fabio, XiuL1}. In such cases, one can not afford to construct high-degree TP polynomial approximations. We now consider the sparse interpolation in the TD spaces. In other words, we study the $s$-sparse interpolation with  the function system is $\{{\bf \Phi}_{\bf n}:{{\bf n}\in \Lambda_{\bf D}^{q,d}}\}$.

The use of TD spaces is promising for cases where the dimensionality is high such that one can not afford to construct high-degree polynomial approximations, that is, we usually consider the cases where $d\gg q$. Similar with before,
the interpolation matrix is
\begin{align}\label{eq:A'}
\mathbf{A_D}:=\big[\mathbf{C}_\mathbf{n}(\mathbf{p}_j)\big]_{j=1,\ldots,m; \, {\bf n}\in \Lambda_{{\bf D}}^{q,d}}.
\end{align}
We have the following lemma:
\begin{lemma}\label{le:TD}
Suppose that $d\geq q$ and $M\geq \max\{2q+1, (2^q(d-1))^2\}$ is a prime number.
Then
\begin{align*}
\mu(\mathbf{A_D})\leq \frac{1}{\sqrt{M}} \frac{2^q\cdot d}{1-\frac{2^{q}(d-1)}{\sqrt{M}}}.
\end{align*}
\end{lemma}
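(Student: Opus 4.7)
The plan is to adapt the argument of Lemma~\ref{le:MICA} to the total-degree index set $\Lambda_{\bf D}^{q,d}$, exploiting the structural fact that for ${\bf n}\in\Lambda_{\bf D}^{q,d}$ the nonzero support $S_{\bf n}:=\{t:n_t\neq 0\}$ satisfies $|S_{\bf n}|\leq|{\bf n}|\leq q$, since the entries of ${\bf n}$ are nonnegative integers. The proof splits, as in Lemma~\ref{le:MICA}, into an upper bound on the off-diagonal Gram entries and a lower bound on the column norms; the TD sparsity is exploited only in the latter.

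For the off-diagonal piece with ${\bf n}\neq{\bf k}$, I would rerun the product-to-sum expansion from the TP proof, writing
$$
\mathbf{C}_{\bf n}({\bf p}_j)\mathbf{C}_{\bf k}({\bf p}_j)=\frac{1}{2^{2d-1}}\sum_{\epsilon\in\{-1,1\}^{2d-1}}\cos(\mathbf{t}(\epsilon,{\bf p}_j)).
$$
For every $\epsilon$, taking the largest $t^*$ with $n_{t^*}\neq k_{t^*}$ forces the coefficient of $j^{t^*}$ in $\mathbf{t}(\epsilon,\cdot)$ to equal $\pm n_{t^*}\pm k_{t^*}$ and to be nonzero (if one of $n_{t^*},k_{t^*}$ is zero this is immediate; otherwise $n_{t^*}=\pm k_{t^*}$ would contradict $n_{t^*}\neq k_{t^*}$ and nonnegativity). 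Since this coefficient lies in $[-2q,2q]$ and $M\geq 2q+1$, it survives reduction mod $M$, so Weil's theorem together with Lemma~\ref{le:mid} yields the same bound as in the TP case,
$$
\left|\sum_{j=0}^{m-1}\mathbf{C}_{\bf n}({\bf p}_j)\mathbf{C}_{\bf k}({\bf p}_j)\right|\leq\frac{(d-1)\sqrt{M}+1}{2}.
$$

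The improvement over Lemma~\ref{le:MICA} comes entirely from the column norm. Using $\cos^2\theta=\tfrac12(1+\cos 2\theta)$ and $\cos^2(0)=1$,
$$
|\mathbf{C}_{\bf n}({\bf p}_j)|^2=\prod_{t\in S_{\bf n}}\cos^2(2\pi n_tj^t/M)=\frac{1}{2^{|S_{\bf n}|}}\sum_{A\subset S_{\bf n}}\prod_{t\in A}\cos(4\pi n_tj^t/M).
$$
The term $A=\emptyset$ contributes exactly $M/2^{|S_{\bf n}|}$ to $\sum_{j=1}^M$. For each nonempty $A$, a further product-to-sum expansion writes $\prod_{t\in A}\cos(4\pi n_tj^t/M)$ as an average of $2^{|A|-1}$ single cosines of polynomials in $j$ whose highest-degree coefficient is $\pm 2n_{\max A}\in(0,2q]$, hence nonzero mod $M$; Weil's theorem then bounds each contribution by $(d-1)\sqrt{M}$. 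Summing over $A\subset S_{\bf n}$, invoking Lemma~\ref{le:mid} to halve the sum and pass to $\{0,\ldots,m-1\}$, and using $|S_{\bf n}|\leq q$, gives
$$
\sum_{j=0}^{m-1}|\mathbf{C}_{\bf n}({\bf p}_j)|^2\geq\frac{M}{2^{q+1}}-\frac{(d-1)\sqrt{M}}{2}.
$$

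Substituting both estimates into the definition of $\mu(\mathbf{A_D})$ and bounding $(d-1)\sqrt{M}+1\leq d\sqrt{M}$ produces the stated inequality, with the hypothesis $M\geq(2^q(d-1))^2$ ensuring that the denominator $1-2^q(d-1)/\sqrt{M}$ remains nonnegative. The principal technical obstacle will be the bookkeeping in the column-norm step: one must check, uniformly in nonempty $A\subset S_{\bf n}$, that the polynomial arising from the second product-to-sum expansion has its highest-degree coefficient nonzero mod $M$, so that Weil's theorem applies. This is exactly the point where the TD constraint $|{\bf n}|\leq q$ does its work, converting the factor $2^d$ of Lemma~\ref{le:MICA} into $2^q$, and where the assumption $d\geq q$ makes the replacement a genuine improvement.
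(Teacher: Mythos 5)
Your proposal is correct and follows essentially the same route as the paper: the off-diagonal Gram entries are bounded exactly as in Lemma \ref{le:MICA} via the product-to-sum expansion, Weil's theorem and Lemma \ref{le:mid}, while the improved column-norm lower bound comes from observing that $\|{\bf n}\|_0\leq|{\bf n}|\leq q$ for ${\bf n}\in\Lambda_{\bf D}^{q,d}$, which replaces the factor $2^d$ by $2^q$. The paper states these steps without detail, and your expansion of the column-norm computation (the subsets $A\subset S_{\bf n}$ and the nonvanishing of the top coefficient mod $M$) supplies precisely the bookkeeping the paper leaves implicit.
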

\begin{proof}

Take ${\bf n}, {\bf k} \in \Lambda_{{\bf D}}^{q,d}, {\bf n}\neq {\bf k}$ and consider
\begin{align*}
\left|\sum_{j=0}^{m-1} \mathbf{C}_{\bf n}(\mathbf{p}_j)\mathbf{C}_{\bf k}(\mathbf{p}_j)\right|.
\end{align*}
Using a similar method as the proof of Lemma \ref{le:MICA}, we have
\begin{align}\label{eq:inn1}
\left|\sum_{j=0}^{m-1} \mathbf{C}_{\bf n}(\mathbf{p}_j)\mathbf{C}_{\bf k}(\mathbf{p}_j)\right|\leq \frac{(d-1)\sqrt{M}+1}{2}.
\end{align}
Let us now investigate the column norm of $\mathbf{A_D},$ e.g.
\begin{align*}
\sum_{j=0}^{m-1} \left|\mathbf{C}_{\mathbf{n}}({\bf p}_j)\right|^2.
\end{align*}
Note that $|\mathbf{n}|\leq q$ provided that ${\bf  n}\in \Lambda_{{\bf D}}^{q,d}$.  Then $\|{\bf n}\|_0\leq q$  provided that $d\geq q$.
 This  gives the following estimate
\begin{align}\label{eq:norm1}
\sum_{j=0}^{m-1} \left|\mathbf{C}_{\mathbf{n}}({\bf p}_j)\right|^2\geq \frac{M}{2^{q+1}}-\frac{(d-1)\sqrt{M}}{2}.
\end{align}
Combining (\ref{eq:inn1}) and (\ref{eq:norm1}), we  complete the proof.
\end{proof}
Note that $4^q> 2q+1$. Using Lemma \ref{le:TD} and a similar method in Theorem \ref{th:TP}, we have
\begin{theorem}\label{th:TD}
  Suppose that $M \geq 9\cdot 4^{q}\cdot d^2\cdot s^2$ is a prime number.
 Let
  $$
  f =\sum_{{\mathbf n}\in \Lambda_{\bf D}^{q,d} } c_{{\mathbf n}}\mathbf{\Phi}_\mathbf{n}.
  $$
Assume that $\c^\#$ is given by the $\ell_1$-minimization problem (\ref{eq:P1}) with the matrix $\mathbf{A}:=\mathbf{A_D}\cdot \mathbf{C}$  and
the vector $\mathbf{b}=(f(x_1),\ldots,f(x_m))^\top$, where
$x_j\in \Theta_M$,  $m=\lfloor M/2 \rfloor+1$ and ${\mathbf C}$ is a diagonal matrix such that  the  columns of $\mathbf{A}$
are standardized to have unit $\ell_2$ norm. Then we have
\begin{equation*}
\|\mathbf{c}^\#-\mathbf{c}\|_2\,\,\lesssim\,\,   \frac{\sigma_{s,1}(\mathbf{c})}{\sqrt{s}}.
\end{equation*}
\end{theorem}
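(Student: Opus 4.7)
The plan is to run exactly the same pipeline as in the proof of Theorem \ref{th:TP}, substituting Lemma \ref{le:TD} in place of Lemma \ref{le:MICA}. First I would note that the diagonal rescaling $\mathbf{C}$ that standardizes the columns of $\mathbf{A_D}$ to unit $\ell_2$ norm does not affect the mutual incoherence, so $\mu(\mathbf{A}) = \mu(\mathbf{A_D})$, and the task reduces to converting this incoherence bound into an RIP bound and then quoting the known $\ell_1$-recovery theorem.

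Next I would invoke the standard implication from \cite{MR1, MR2}: for any matrix with unit-norm columns, the $s$-order RIP constant satisfies $\delta_s \leq (s-1)\mu(\mathbf{A})$. Combining this with Lemma \ref{le:TD} gives
\[
\delta_s \,\leq\, (s-1)\cdot \frac{1}{\sqrt{M}}\cdot\frac{2^q d}{1-\frac{2^q(d-1)}{\sqrt{M}}}.
\]
Now the arithmetic check is simply that the hypothesis $M \geq 9\cdot 4^q d^2 s^2$, i.e.\ $\sqrt{M}\geq 3\cdot 2^q d s$, forces $\delta_s < 1/3$. From $\sqrt{M}\geq 3\cdot 2^q d s$ one gets $\sqrt{M}-2^q(d-1)\geq 2^q d(3s-1)$, so the bound collapses to $(s-1)/(3s-1)$, which is strictly less than $1/3$ for every $s\geq 1$. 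Once $\delta_s<1/3$ is in hand, the recovery estimate is the direct conclusion of the theorem of \cite{CZ} already used in the proof of Theorem \ref{th:TP}.

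A small amount of bookkeeping is required to make sure Lemma \ref{le:TD} actually applies: it is stated under $d\geq q$ and $M\geq\max\{2q+1,(2^q(d-1))^2\}$. The first of these should be imposed (or handled by falling back on the TP bound of Theorem \ref{th:TP} when $d<q$), and the second is automatic from the theorem hypothesis since $9\cdot 4^q d^2 s^2 \geq (2^q d)^2\geq (2^q(d-1))^2$ and since $4^q>2q+1$ as remarked just before the theorem. The main obstacle is therefore not technical at all: the only delicate point is pinning down the constant $9$ in $9\cdot 4^q d^2 s^2$ so that the resulting $\delta_s$ lands strictly below the $1/3$ threshold of \cite{CZ}; there is no new idea beyond chaining the three quoted results.
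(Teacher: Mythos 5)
Your proposal is correct and follows exactly the route the paper takes: the paper's proof of Theorem \ref{th:TD} is literally the one-line instruction to combine Lemma \ref{le:TD} with the argument of Theorem \ref{th:TP} (coherence $\Rightarrow$ $\delta_s \leq (s-1)\mu$ $\Rightarrow$ $\delta_s < 1/3$ $\Rightarrow$ the recovery bound of \cite{CZ}), and your arithmetic verifying $\delta_s \leq (s-1)/(3s-1) < 1/3$ is the intended computation. Your observation that the hypothesis $d \geq q$ of Lemma \ref{le:TD} is not restated in the theorem is a fair and slightly more careful piece of bookkeeping than the paper itself provides, but it does not change the argument.
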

\begin{remark}
Similar with before, if we suppose that $f\in {\mathbf U}^s$ in Theorem \ref{th:TD}, i.e., the vector $\c$  is $s$-sparse,  then
Theorem \ref{th:TD} implies that $(P_1)$ can recover the $s$-sparse function $f$ exactly from the function values on
the point set $\Theta_M$ provided  the number of interpolation points  $m \geq 9/2\cdot 4^{q}\cdot d^2\cdot s^2+1$ and $M$ is
a prime number.
\end{remark}
\begin{remark}
When $M$ is divided by $4$, the set of the first entry of ${\bf p}_j$ is Chebyshev nodes set where $j$ runs over all  odd integers in
$[1,M/2]$. And hence, the points ${\bf p}_j$ can be considered as an extension of Chebyshev nodes. We hope to investigate multivariate
Lagrange Interpolation on the points ${\bf p}_j$ in future work (see \cite{yuanxu}).
\end{remark}

\section{Numerical examples}
In this section we  make numerical experiments to compare the performance of the determinant points $\Theta_M$ and that
of the random points. The random interpolation points are chosen based on the continuous probability model, i.e. $x_1,\ldots,x_m$ are independent random
variable having the uniform distribution on $[-1,1]^d$.
 Given the function system $\{B_j\}_{j\in \Lambda}$, the support set of $f\in {\bf U}^s$ is drawn from the uniform
distribution over the set of all subset of $\Lambda$ of size $s$. The non-zero coefficients of $f$  have the Gaussian distribution with mean zero
and standard deviation one. To solve the $\ell_1$ minimization, we  employ the available tools SPGL1 from \cite{L1matlab} that was implemented in the MATLAB.
We repeat the experiment $100$ times for each fixed sparsity $s$ and calculate the success rate.
We will conduct two groups of tests, namely, the TP Chebyshev spaces and the TD Chebyshev spaces.
\begin{figure}[t]
\begin{center}
\includegraphics[width=0.48\textwidth]{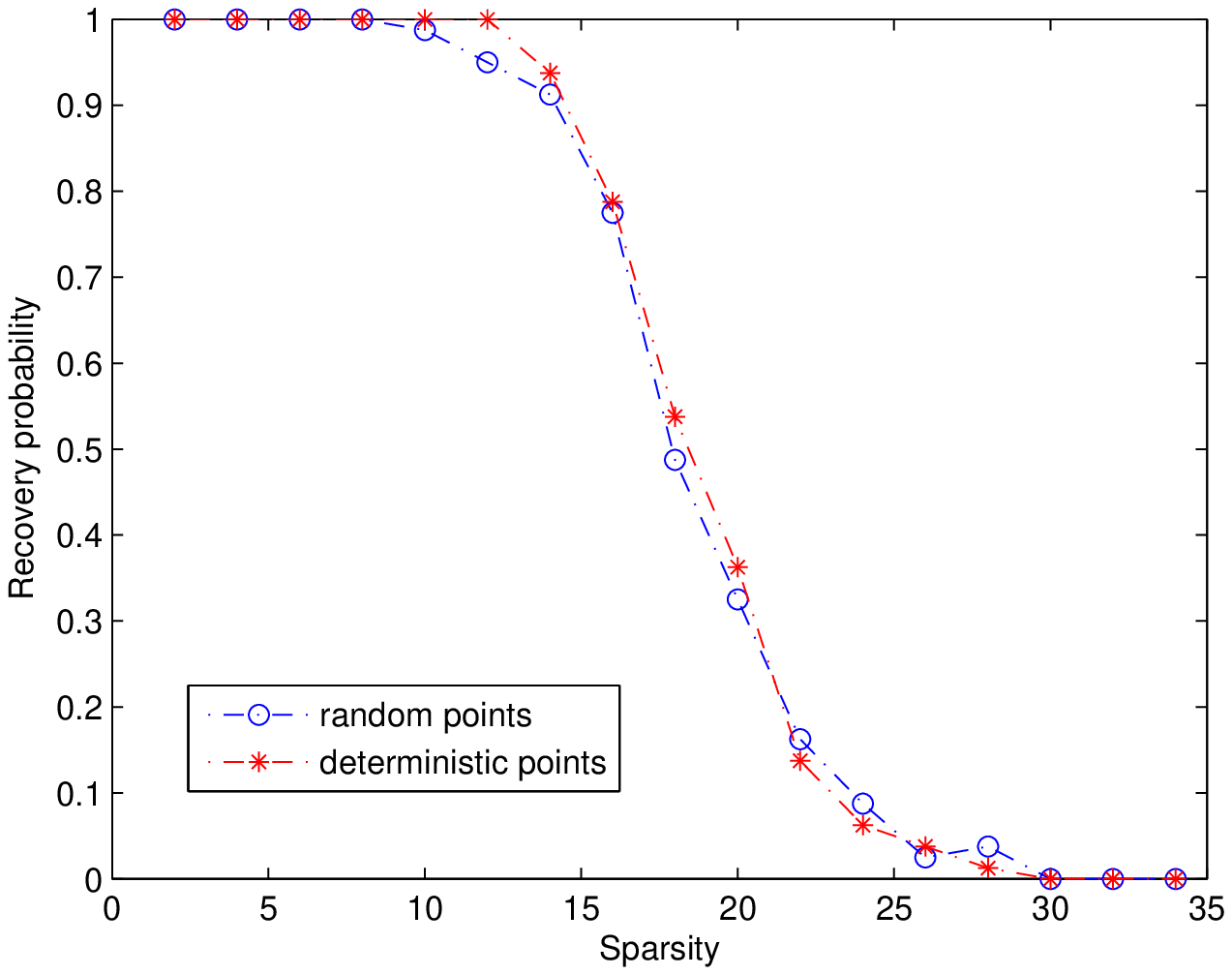}
\includegraphics[width=0.48\textwidth]{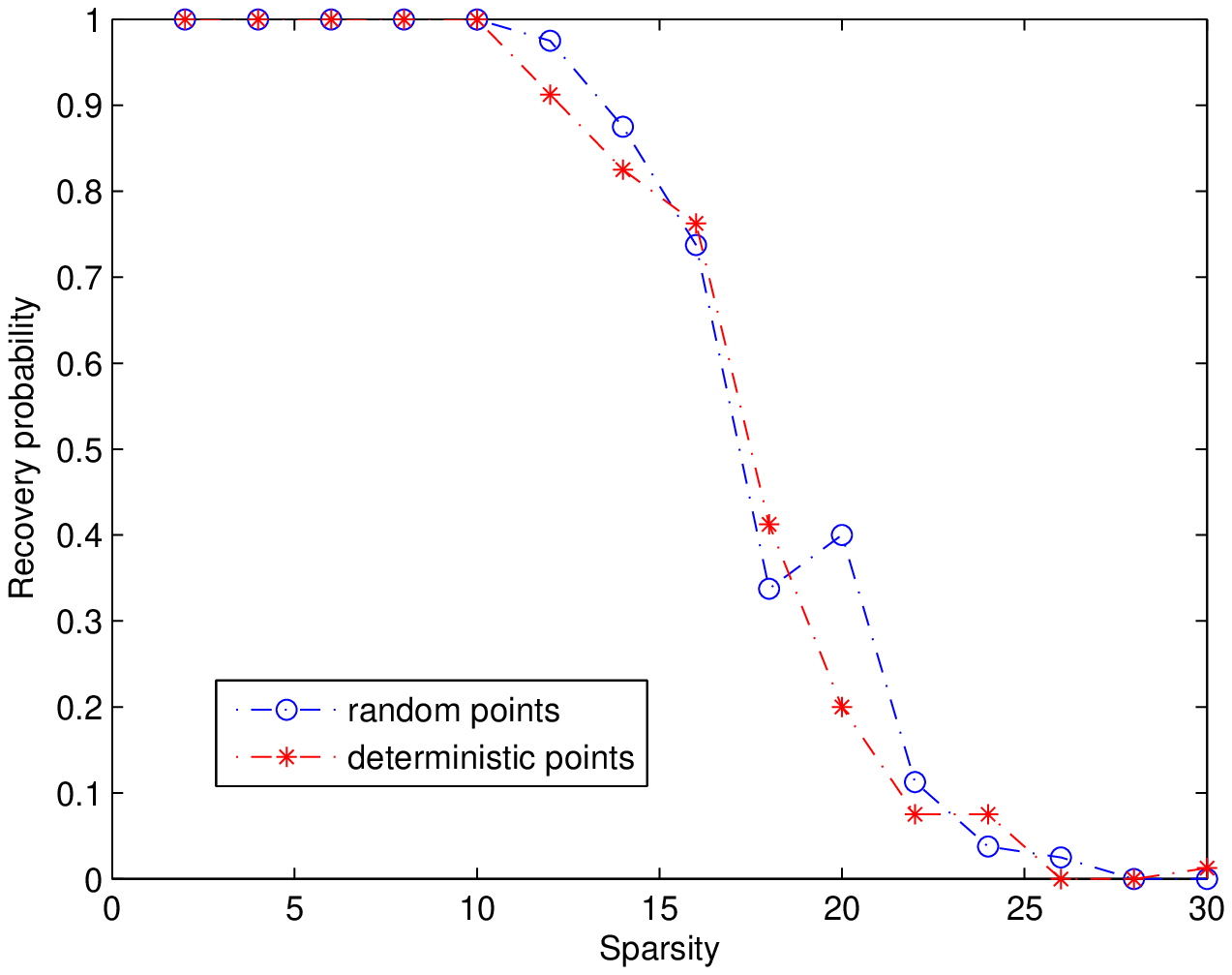}
\end{center}
\caption{
Numerical results for  the comparison of the random sampling and the deterministic sampling of the sparse interpolation
 in TP spaces. Left: $d=2,$ $q=9,$ $m=49,$ and Right: $d=3,$ $q=6,$ $m=69.$}
\end{figure}
\subsection{Tests for the TP Chebyshev spaces}

We first choose the function system as
$$
\{\mathbf{\Phi}_\mathbf{n}:  \mathbf{n}\in \Lambda_{\bf P}^{q,d} \}.
$$
As we discussed before, TP spaces are not frequently used in real applications due to the curse of dimensionality. Thus, we consider low dimensional cases of $d=2$ and $d=3,$   and we also choose the degree $q$ as $9$ and $6$, respectively.
 We remark that the parameters $d$ and $q$ chosen in ours numerical examples bear no special meaning, as the results from other parameters demonstrate similar behavior.  The left graph in Fig. 1 depicts the success rate when $d=2$, $q=9$ and $m=49$ points are used, while the right graph
 shows the success rate for  $d=3, q=6,$ and $m=69$.
The numerical results show that the performance of the deterministic points is similar with that of the random points.

\begin{figure}[t]
\begin{center}
\includegraphics[width=0.48\textwidth]{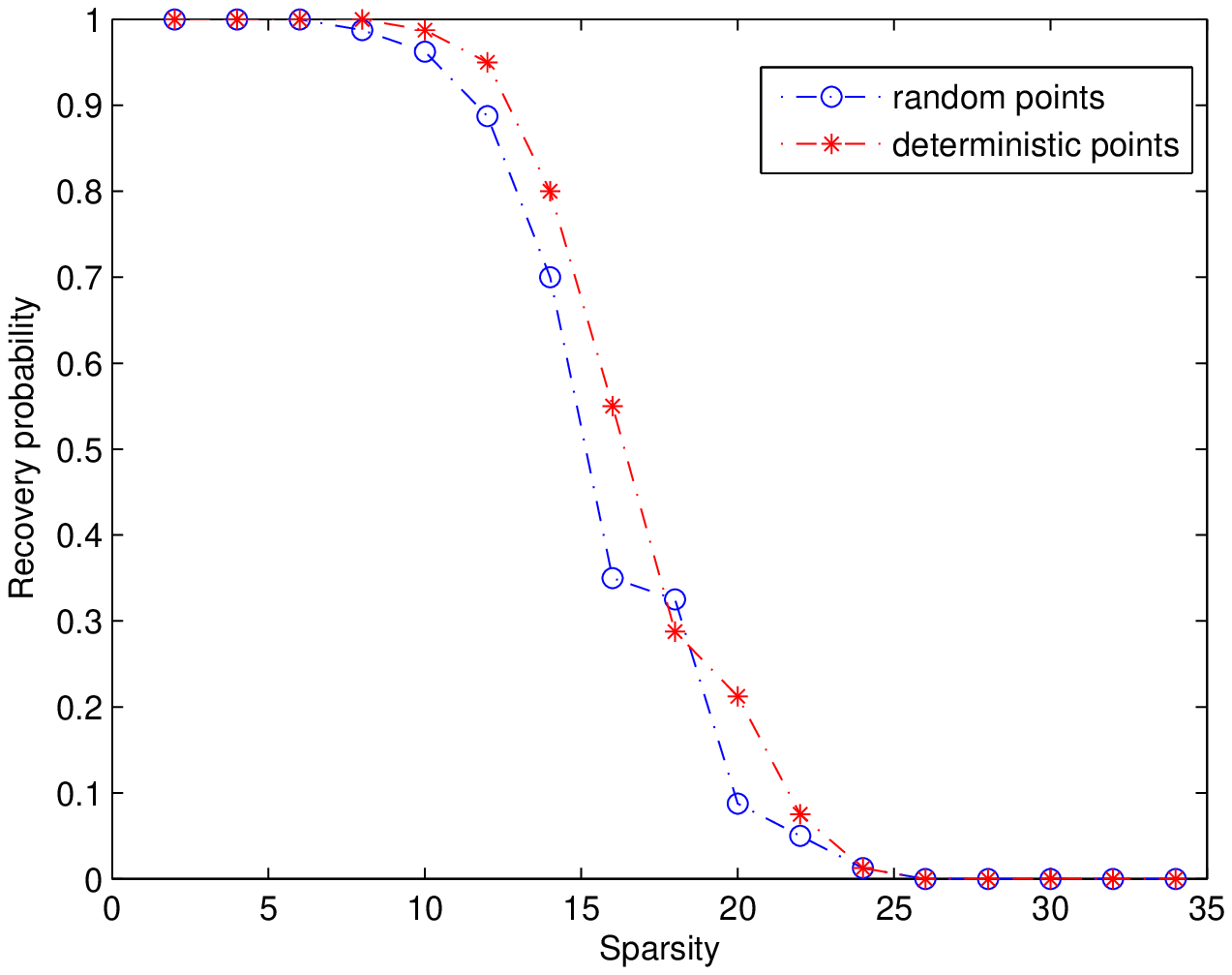}
\includegraphics[width=0.48\textwidth]{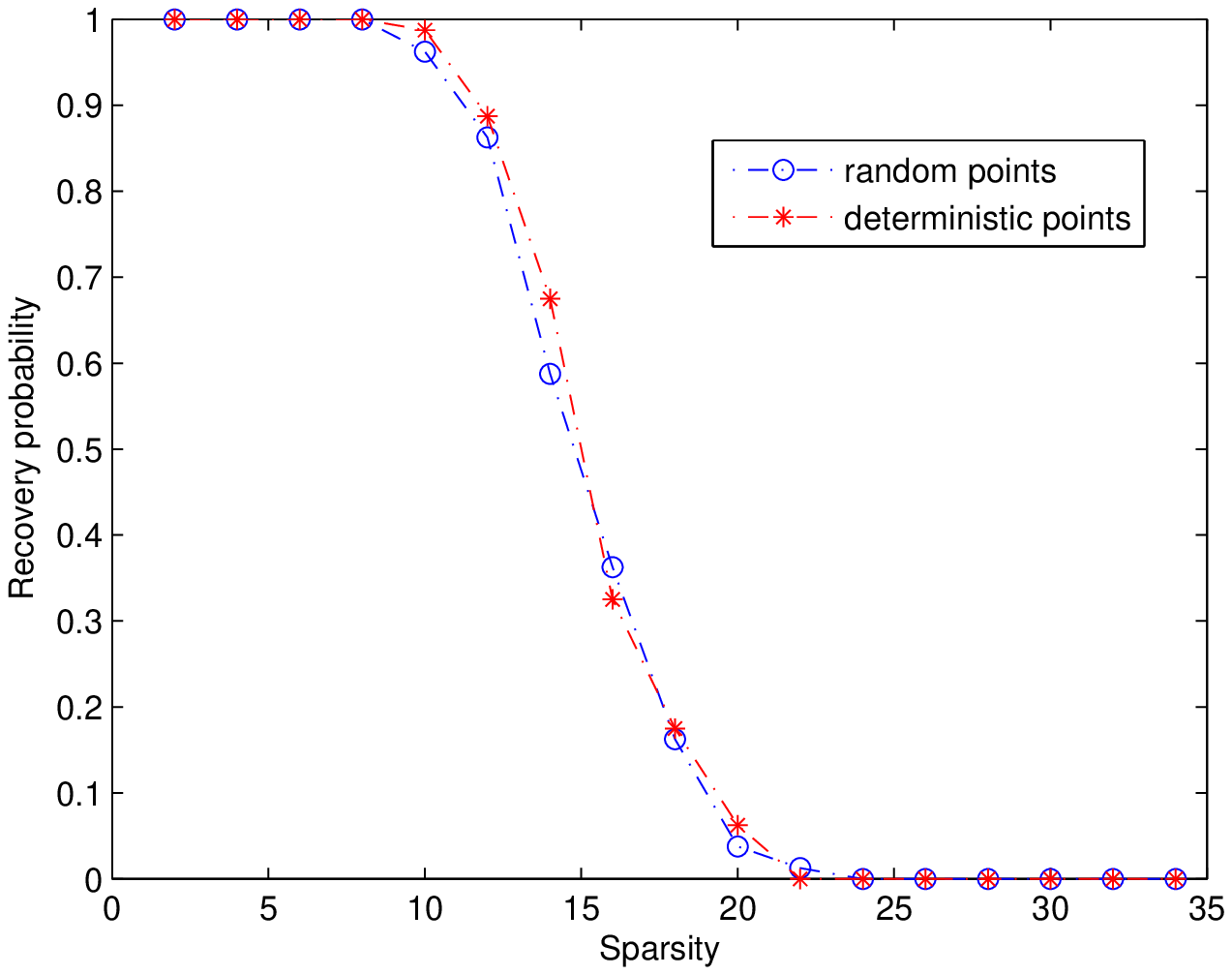}
\end{center}
\caption{
Numerical results for the  comparison of the random sampling and the deterministic sampling of the sparse interpolation
 in TD spaces ($m=69$). Left: $d=10, q=3,$ and right: $d=30,q=2.$}
\end{figure}

\subsection{Tests for the TD Chebyshev spaces}
Now we choose the function system as
 $$
\{\mathbf{\Phi}_\mathbf{n}:  \mathbf{n}\in \Lambda_{\bf D}^{q,d} \}
$$
and test the recovery properties in the TD spaces, which are very useful when dealing with high dimensional problems. In this part, we will consider high dimensional cases with $d=10$ and $d=30.$ The numerical treatment is the same as in TP spaces and the recovery results are demonstrated in Fig. 2. The right plot is for $d=30, q=2,$ and $m=69$, while the left plot is for $d=10, q=3$ and $m=69$. Again, the performance of the deterministic points is comparable with that of the random points.

\section*{Acknowledgment}
Z. Xu is supported by the National Natural Science Foundation of China (11171336) and by the Funds for Creative
       Research Groups of China (Grant No. 11021101).  T. Zhou
is supported by the National Natural Science Foundation of China (No.91130003 and No.11201461).


\begin{thebibliography}{99}
\bibitem{L1matlab}
E. van den Berg and M. Friedlander, SPGL1: A solver for large-scale sparse reconstruction, http://www.cs.ubc.ca/labs/
scl/spgl1, 2007.


\bibitem{MR2}
J. Bourgain, S.J. Dilworth, K. Ford, et al. Explicit constructions of RIP matrices and related problems,
Duke Math J, 159, 145-185 (2011).



\bibitem{CaWaXu}
T. Cai, L. Wang, and G. Xu, Stable Recovery of Sparse Signals
and an Oracle Inequality, IEEE Trans. Inf. Theory, 56, 3516-3522 (2010).

\bibitem{CZ}
T. Cai and A. Zhang, Sharp RIP bound for sparse signal and low-rank matrix recovery,
Appl. Comput. Harmon. Anal, 35, 74-93 (2013).



\bibitem{candes}
E. J. Cand\`{e}s, The restricted isometry property and its implications for compressed sensing, C. R. Math. Acad. Sci. Paris, Series I, 346, 589-592(2008).


\bibitem{CanTao05}
E. J. Cand\`es and T. Tao, Decoding by linear programming, IEEE Trans. Inf. Theory, 51,
4203-4215(2005).


\bibitem{CRT}
E. J. Candes, J. Romberg, and T. Tao, Stable signal recovery from
incomplete and inaccurate measurements, Comm. Pure Appl. Math., 59, 1207-1223(2006).


\bibitem{AT}
E. W. Cheney and W. A. Light, A course in approximation theory, Brooks Cole, Pacific Grove, 1999.


\bibitem{spde2} A. Cohen, R. DeVore, and C. Schwab. Analytic regularity and polynomial approximation
of parametric and stochastic elliptic PDEs,  Anal. Appl., 09, 11 (2011).

\bibitem{MR1}
R. DeVore, Deterministic constructions of compressed sensing matrices. J. Complexity, 23, 918-925 (2007).



\bibitem{DonHuo}
D. L. Donoho and X. Huo, Uncertainty principles and ideal
atomic decomposition, IEEE Trans. Inf. Theory, 47, 2845-2862(2001).

\bibitem{SPDE}
A. Doostan and H. Owhadi, A non-adapted sparse approximation of PDEs with stochastic inputs, J. Comput. Phys.,
230, 3015-3034(2011).


\bibitem{Fuchs}
J. Fuchs,
On sparse representations in arbitrary
redundant bases, IEEE Trans. Inf. Theory, 50, 1341-1344(2004).

\bibitem{GrNi}
R. Gribonval and M. Nielsen, Sparse representations in unions of bases,
IEEE Trans. Inf. Theory, 49, 3320-3325(2003).


\bibitem{Iwen}
M. A. Iwen, Combinatorial sublinear-time Fourier algorithms, Found. Comput. Math., 10, 303-338(2010).

\bibitem{Rauhut2}
S. Kunis and H. Rauhut, Random sampling of sparse trigonometric polynomials II- Orthogonal matching pursuit versus basis pursuit, Found. Comput. Math., 8, 737-763(2008).

\bibitem{LWC}
D. Lawlor, Y. Wang and A. Christlieb, Adaptive sub-linear Fourier algorithms, Advances in Adaptive Data Analysis, 5, (2013).


\bibitem{Fabio}
J. Beck, F. Nobile, L. Tamellini and R. Tempone, On the optimal polynomial approximation of stochastic PDEs by Galerkin and Collocation methods, Mathematical Models and Methods in Applied Sciences, 22, (2012).



\bibitem{cprony}
T. Peter and G. Plonka, A generalized Prony method for reconstruction of
sparse sums of eigenfunctions of linear operators, Inverse Problems, 29, 1-21(2013).


\bibitem{prony}
D. Potts and M. Tasche, Sparse polynomial interpolation in Chebyshev bases, Linear Algebra and its Applications, (2013).



\bibitem{Rauhut1}
H. Rauhut, Random Sampling of Sparse Trigonometric Polynomials, Appl. Comput. Harmon. Anal., 22, 16-42(2007).

\bibitem{RW}
H. Rauhut and R. Ward, Sparse Legendre expansions via $\ell_1$-minimization, J. Approx. Theory, 164, 517-533(2012).

\bibitem{Bspline}
L. Rebollo-Neira and Z. Xu, Adaptive non-uniform B-spline dictionaries on a compact interval, Signal Processing, 90,  2308-2313(2010) .


\bibitem{weil}
A. Weil, On some exponential sums, PNAS, USA, 34, 204-207(1948).


\bibitem{XiuK}
D. Xiu and G.E. Karniadakis, The Wiener-Askey polynomial chaos for
stochastic differential equations, SIAM J. Sci. Comput.,24,
 619-644(2012).

\bibitem{yuanxu}
Yuan Xu, Lagrange interpolation on Chebyshev points of two variables,
 87,  220-238(1996).

\bibitem{xu}
Z. Xu, Deterministic Sampling of Sparse Trigonometric
Polynomials, J. Complexity, Vol. 27, 133-140(2011).



\bibitem{XiuL1}
L. Yan, L. Guo and D. Xiu, Stochastic collocation algorithms using $L^1$ minimazation, Inter. J. Uncert. Quanti., 2, 279-293 (2012).

\end{thebibliography}
\end{document}